
\documentclass{article}
\usepackage{amsmath}
\usepackage{amssymb}
\usepackage{esint}
\usepackage{geometry}
\usepackage{cleveref}


\geometry{a4paper, left=1in, right=1in, top=2in, bottom=2in}
\makeatletter
 \usepackage{amsfonts}
\usepackage{color}
\usepackage{graphicx}
\setcounter{MaxMatrixCols}{30}

\providecommand{\U}[1]{\protect \rule{.1in}{.1in}}

\newtheorem{theorem}{Theorem}[section]

\newtheorem{corollary}[theorem]{Corollary}

\newtheorem{definition}[theorem]{Definition}
\newtheorem{example}[theorem]{Example}

\newtheorem{lemma}[theorem]{Lemma}

\newtheorem{proposition}[theorem]{Proposition}
\newtheorem{remark}[theorem]{Remark}

\newenvironment{proof}[1][Proof]{\noindent \textbf{#1.} }{\  \rule{0.5em}{0.5em}}

\def\lu{\underline{\mu}}
\def\ou{\overline{\mu}}

\def\lt{\left}
\def\rt{\right}

\def\br{\mathbb{R}}

\def\cp{\mathcal{P}}
\def\cf{\mathcal{F}}

\def\cf{\mathcal{F}}

\def\pe{{\mathbb{E}^{\cp}}}

\def\bbl{\boldsymbol{\lambda}}
\def\bbm{\boldsymbol{\mu}}

\def\bbn{\boldsymbol{\nu}}
\def\bbk{\boldsymbol{\kappa}}

\def\ccp{{\rm{co}(\cp)}}

\def\ccs{{\rm{cov}{(\bf{\Sigma}})}}

\makeatother

\begin{document}

\title{On the upper and lower covariances under multiple probabilities
\footnote{This work was supported by NSF of Shandong Province (No.ZR2021MA018),  NSF of China (No.12326603 and No.11601281),  National Key R\&D Program of China (No.2018YFA0703900) and the Qilu Young Scholars Program of Shandong University.}}

\author{Xinpeng Li\footnote{Research Center for Mathematics and Interdisciplinary Sciences; Frontiers Science Center for Nonlinear Expectations (Ministry of Education), Shandong University, 266237, Qingdao, China.
  Email:lixinpeng@sdu.edu.cn} \ \ \
 Jingxu Niu\footnote{Research Center for Mathematics and Interdisciplinary Sciences, Shandong University, 266237, Qingdao, China.}  \ \ \
 Ke Zhou\footnote{Business School of Hunan University, 410012, Changsha, China.}
}
\date{ }

\maketitle

\abstract{In this paper, we define the upper (resp. lower) covariance under multiple probabilities via a corresponding max-min-max (resp. min-max-min) optimization problem and the related properties of covariances are obtained. In particular, we propose a fast algorithm of calculation for upper and lower covariances under the finite number of probabilities. As an application, our algorithm can be used to solve a class of quadratic programming problem exactly, and we obtain a probabilistic representation of such quadratic programming problem. }

\textbf{Keywords}: minimax theorem, multiple probabilities, quadratic programming, sublinear expectation, upper and lower covariances


\section{Introduction}
\par In the classical probability theory, covariance is a measure of the joint variability of two random variables. Precisely, let $(X,Y)$ be two random variables on the probability space  $(\Omega,\mathcal{F},P)$, the covariance of $X$ and $Y$ under $P$, denoted by $C_P(X,Y)$, is defined by
\begin{equation}\label{cov}
C_P(X,Y)=E_P[(X-E_P[X])(Y-E_P[Y])],
\end{equation}
where $E_P$ is the  mathematical expectation introduced by $P$. Such notion was widely used in many fields, such as financial economics (see for example, Markowitz \cite{markowitz}), genetics and molecular biology (see for example, Price \cite{price1970}),  etc. In particular, if $Y=X$ in \cref{cov}, the corresponding covariance is called variance of $X$, denoted by $V_P(X)$, which is used to describe the fluctuation of a random variable.

The aim of this paper is to generalize the notion of covariance in the case of multiple probability measures. The motivation is inspired by a simple example. Let $(X,Y)$ be two random variables representing the daily returns of two stocks in the stock market. We hope to estimate the ``risk" (variance and covariance) of the stock in the next month. For simplicity,  we assume there are only two scenarios of the stock markets, bull market and bear market. The returns of the stocks in bull and bear markets are  characterized by two probabilities respectively, and we call them Bull probability and Bear probability. We do not know the true state (bull or
bear) of the stock market in the future. This is a typical example of Knightian uncertainty, introduced by Knight \cite{knight2013risk}.
  We prefer the bivariate  normal distribution to characterize the ``risk'' in the bull and bear markets. The corresponding parameters can be estimated from the historical  data of the stocks in the bull and bear market respectively. More precisely, $(X,Y)\sim N(0.1,0.1,0.4,0.4,0.1)$ under Bull probability and $(X,Y)\sim N(-0.1, -0.1,0.4,0.4,0.1)$ under Bear probability,  where $N(\mu_X,\mu_Y,\sigma_X^2,\sigma_Y^2,\rho)$ denotes the bivariate normal distribution with means $\mu_X$ and $\mu_Y$ and variances $\sigma_X^2$ and $\sigma_Y^2$ respectively, and $\rho$ is the correlation coefficient. Now the problem is that how to reasonably define the variance and covariance for such stocks with Knightian uncertainty based on the parameters for each scenarios. Sublinear expectation theory introduced by Peng (see Peng \cite{peng2010nonlinear}) is a powerful tool to deal with such problems with uncertainty, in which the sublinear expectation is a sublinear functional on the certain space of random variables. One typical example of sublinear expectation is the upper expectation $\pe$, defined by
\begin{equation*}
  \pe[X]=\sup_{P\in\cp}E_P[X],
\end{equation*}
where $\cp$ is the set of probability measures characterizing Knightian uncertainty. The notion of sublinear expectation is also known as the upper expectation in robust statistics (see Huber \cite{huber}), or the upper prevision in the theory of imprecise probabilities (see Walley \cite{walley1991statistical}), and has the closed relation with coherent risk measures (see Artzner et al. \cite{A}, F\"{o}llmer and Schied \cite{FS}).  The sublinear expectation theory has been applied successfully in data analysis to account for Knightian uncertainty (see Ji et al. \cite{JPY}, Peng et al. \cite{PYY}). We will define the notion of upper and lower covariance based on the sublinear expectation $\pe$ and study their properties systematically.

The notion of upper and lower variance was firstly introduced in Walley \cite{walley1991statistical} for the bounded random variables under coherent prevision and then generalized by Li et al. \cite{li2022upper}. Formally, the upper and lower variances of $X$, denoted by $\overline{V}(X)$ and $\underline{V}(X)$ respectively, are defined by
$$\overline{V}(X)=\min_{\mu\in\mathbb{R}}\pe[(X-\mu)^2];$$
$$\underline{V}(X)=\min_{\mu\in\br}-\pe[-(X-\mu)^2].$$
Quaeghebeur \cite{quaeghebeur2008lower} studied the notion of upper and lower covariances for bounded random variables, where the covariance is calculated by an optimization problem. This paper will further generalize the notions of covariance under the framework of sublinear expectation theory. As far as we know, there is quite little literature on this topic. Both \cite{walley1991statistical} and \cite{quaeghebeur2008lower} are not concern about the algorithm for calculation of these notions. Walley  pointed out that it is quite easy to calculate the lower variance, but
the calculation of upper variance is more difficult (see Appendix G in \cite{walley1991statistical}). The calculation of upper and lower variances under multiple probability measures is solved in Li et al. \cite{li2023}. This paper will focus on the calculation of the upper  and lower covariance under multiple probability measures. It will be very useful in practice. For example, it can be used to determine the parameters in the robust portfolio models based on the historical data in the market. Differing from lower variance, which is easily calculated, lower covariance is also challenging to compute, akin to the case of upper covariance. The primary challenge in calculating upper (or lower) covariance lies in the complexity of the max-min-max (or min-max-min) optimization problem. While upper (or lower) covariance can be determined through quadratic programming using the envelope theorem, it becomes a hard problem in certain instances due to the indefinite nature of the matrix in the quadratic term. We present a more efficient algorithm that operates in polynomial time for calculating both upper and lower covariances. This algorithm can be seen as an application of our findings in quadratic programming.

  \par The paper is structured as follows:

In Section 2, we revisit the concepts of sublinear expectation, upper variance and lower variance.
Section 3 is dedicated to a systematic exploration of the definitions and properties associated with upper and lower covariances.
The methodology for calculating upper and lower covariances is detailed in Section 4.
In Section 5, we illustrate an application of our findings within the domain of quadratic programming.

\section{Preliminaries}

We firstly recall the preliminaries of sublinear expectation theory introduced in Peng  \cite{peng2010nonlinear}, which is a powerful tool to deal with problems with uncertainty, i.e., problems under multiple probabilities.

Let $\Omega$ be a given set and  $(\Omega,\cf)$ be a measurable space. Let $\cp$ be a set of probability measures on $(\Omega,\cf)$ characterizing Knightian uncertainty. We define the corresponding upper expectation $\pe$ by
$$\pe[X]=\sup_{P\in\cp}E_P[X].$$
Obviously, $\pe$ is a sublinear expectation satisfying
  \begin{itemize}
    \item [(1)]Monotonicity: $\pe[X]\le \pe[Y],$ if $X\le Y$;
    \item [(2)]Constant preserving : $\pe[c]=c, \forall c\in \mathbb{R}$;
    \item [(3)]Sub-additivity : $\pe[X+Y]\le \pe[X]+\pe[Y]$;
    \item [(4)]Positive homogeneity : $\pe[\lambda X]=\pe[X], \forall \lambda \geq 0$.
  \end{itemize}
We call $(\Omega,\cf,\pe)$ the sublinear expectation space.

In this paper,  we denote $\ccp$ the convex hull of $\cp$. It is clear that
  $$\pe[X]=\sup_{P\in\cp}E_P[X]=\sup_{P\in\ccp}E_P[X].$$

The {{upper expectation} }of a random variable $X$, denoted by $\overline{\mu}_X$, is represented by $\pe[X]$. Similarly, the {{lower expectation} }of $X$, denoted by $\underline{\mu}_X$, is given by $-\pe[-X]$. These are respectively referred to as the {upper mean} and {lower mean} of $X$. The {{mean-uncertainty} }of a random variable $X$, denoted by $M_X$, is characterized by the interval $[\underline{\mu}_X,\overline{\mu}_X]$. This interval represents the bounds within which the mean of $X$ is expected to vary.

We begin by revisiting the concept of upper and lower variances, initially introduced by Walley \cite{walley1991statistical} for bounded random variables under coherent prevision. This notion was subsequently extended and generalized by Li et al. \cite{li2022upper}.

Let  $X$ be a random variable on the probability space $(\Omega,\cf,P)$ with finite second moment. The variance of $X$, denoted by $V_P(X)$, is defined by
 $$V_P(X)=E_P[(X-E_P[X])^2].$$
 We note that
 \begin{equation}\label{eq1}
 E_P[(X-\mu)^2]=V_P(X)+(E_P[X]-\mu)^2,
 \end{equation}
immediately,
 $$V_P(X)=\min_{\mu\in\br}E_P[(X-\mu)^2],$$
then we use $\pe$ instead of $E_P$ to obtain the following definition.

\begin{definition}\label{def21}
    For each random variable $X$ on the sublinear expectation space $(\Omega,\mathcal{F},\pe)$ with $\pe[X^2]
    <\infty$, we define the {upper variance }of $X$ as
    \[\overline{V}(X): = \min_{\mu\in M_X} \pe[(X-\mu)^2]
     \]
     and the {lower variance} of $X$ as
     \[ \underline{V}(X): = \min_{\mu\in M_X} \lt(-\pe[-(X-\mu)^2]\rt)
       \]
 \end{definition}

Since $\pe[(X-\mu)^2]$ is a strict convex function of $\mu$, there exists a unique $\mu^*\in M_X$ such that $\overline{V}(X)=\pe[(X-\mu^*)^2]$.  It is not hard to show that the domain of $\mu$ can be enlarged from $M_X$ to $\br$ in Definition \ref{def21}.

 Thanks to the minimax theorem in Sion \cite{sion1958general}, we have the following variance envelop theorem.
\begin{theorem}\label{vpt}
    Let $X$ be a random variable on sublinear expectation space $(\Omega,\mathcal{F},\pe)$ with
    $\pe[X^2]<\infty$. Then we have
    \begin{itemize}
      \item[(1)] $\overline{V}(X)=\sup_{P\in\ccp}V_P(X)$.
      \item[(2)] $\underline{V}(X)=\inf_{P\in\ccp}V_P(X)=\inf_{P\in\cp}V_P(X)$.
      \end{itemize}
\end{theorem}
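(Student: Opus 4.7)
The plan is to read off both identities from the identity $E_P[(X-\mu)^2]=V_P(X)+(E_P[X]-\mu)^2$ in \eqref{eq1}, together with a minimax swap of $(\mu,P)$.

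For part (1), I would first rewrite
\[
\overline{V}(X)=\min_{\mu\in M_X}\pe[(X-\mu)^2]=\min_{\mu\in M_X}\sup_{P\in\ccp}E_P[(X-\mu)^2],
\]
and then try to exchange the $\min$ and $\sup$ via Sion's minimax theorem. The set $M_X=[\lu_X,\ou_X]$ is a compact interval, $\ccp$ is convex, the function $f(\mu,P)=E_P[(X-\mu)^2]$ is continuous and strictly convex in $\mu$ for each $P$ (it is a quadratic in $\mu$) and linear, hence concave and upper semicontinuous, in $P$ for each $\mu$ in the natural weak topology, so Sion's hypotheses are met. After the swap I would compute, for each fixed $P\in\ccp$, the inner minimum: by \eqref{eq1} the quadratic $\mu\mapsto E_P[(X-\mu)^2]$ is minimized at $\mu=E_P[X]$ with minimum value $V_P(X)$. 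The key point is that $E_P[X]\in M_X$ for every $P\in\ccp$, because $E_P[X]$ is a convex combination of values $E_Q[X]$ with $Q\in\cp$, all of which lie in $[\lu_X,\ou_X]$; hence the unconstrained minimum over $\br$ coincides with the minimum over $M_X$. Combining the steps yields $\overline{V}(X)=\sup_{P\in\ccp}V_P(X)$.

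For part (2), no minimax is needed: from the dual expression
\[
\underline{V}(X)=\min_{\mu\in M_X}\inf_{P\in\ccp}E_P[(X-\mu)^2]
\]
I would simply interchange the two infima and again use \eqref{eq1} to obtain $\min_{\mu\in M_X}E_P[(X-\mu)^2]=V_P(X)$ for every $P\in\ccp$, giving $\underline{V}(X)=\inf_{P\in\ccp}V_P(X)$. The remaining claim $\inf_{P\in\ccp}V_P(X)=\inf_{P\in\cp}V_P(X)$ I would prove by a direct convexity argument: for $P=\sum_i\lambda_i P_i$ with $P_i\in\cp$, write $V_P(X)=E_P[X^2]-(E_P[X])^2$, use linearity of $E_P[X^2]$ and Jensen's inequality on $(E_P[X])^2$ to obtain $V_P(X)\ge\sum_i\lambda_i V_{P_i}(X)\ge\inf_{Q\in\cp}V_Q(X)$, while the reverse inequality is immediate from $\cp\subset\ccp$. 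This shows $V_P(X)$ is concave in $P$, so the infimum over the convex hull is attained on $\cp$ itself.

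The main obstacle is the verification of Sion's theorem in step (1): $\ccp$ is not generally compact, so I must rely on the compactness of $M_X$ on the $\mu$-side, and I must be careful that $E_P[(X-\mu)^2]$ is indeed well-defined and finite throughout $M_X\times\ccp$; this uses the assumption $\pe[X^2]<\infty$ together with $M_X\subset\br$ bounded. Once this is in place the rest is essentially an algebraic consequence of \eqref{eq1}, and part (2) requires only an elementary swap and a Jensen-type inequality.
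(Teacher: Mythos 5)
Your proposal is correct and follows essentially the same route the paper takes: the paper derives this theorem from Sion's minimax theorem (and carries out the identical swap-then-minimize argument explicitly for the covariance analogue in Theorem \ref{prop_rt}), relying on the compactness and convexity of $M_X$ rather than of $\ccp$, exactly as you do. Your additional observations --- that $E_P[X]\in M_X$ for every $P\in\ccp$ so the inner minimum equals $V_P(X)$, and that the final equality in (2) follows from concavity of $P\mapsto V_P(X)$ via Jensen --- are the right supporting details and are consistent with the paper's treatment.
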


\begin{remark}
Unlike the envelope theorems presented in Walley \cite{walley1991statistical} and Li et al. \cite{li2022upper}, our methodology does not require the weak compactness of $\cp$. Yet, it is important to note that the convexity of $\cp$ plays a crucial role, as demonstrated in the example that follows.
\end{remark}

\begin{example}\label{em1}
Let $X$ be normally distributed with $X\sim N(0.1,0.4)$ under $P_1$ and $X\sim N(-0.1,0.4)$ under $P_2$. Taking $\cp=\{P_1,P_2\}$, we obtain $\underline{V}(X)=0.4$ and
$$\overline{V}(X)=0.41>0.4=\max_{P\in\cp}V_P(X).$$
\end{example}

Recently, Li et al. \cite{li2023} obtain the following proposition for upper variance under finitely many probabilities.

\begin{proposition}\label{pp25}
Let $\cp=\{P_1,\cdots,P_K\}$ and $X$ be a random variable with $\pe[X^2]<\infty$.  Then we have
$$\overline{V}(X)=\max_{1\leq i<j\leq K}\overline{V}_{ij}(X),$$
where $\overline{V}_{ij}(X)$ is the upper variance of $X$ under probabilities $P_i$ and $P_j$.

Furthermore, let $\mu_i=E_{P_i}[X]$ and $\kappa_i=E_{P_i}[X^2]$, $1\leq i\leq K$.
Then we have
$$\overline{V}_{ij}(X)=\max\{\kappa_i-\mu_i^2,\kappa_j-\mu_j^2,h(\mu_{ij})\},$$
where  $h(x)=x^2-2\mu_ix+\kappa_i$ and
\begin{align*}
 \mu_{ij}=
 \begin{cases}
    \lt(\underline{\mu}_{ij}\vee\frac{\kappa_{j}-\kappa_{i}}{2(\mu_{j}-\mu_{i})}\rt)\wedge\overline{\mu}_{ij}, & \mu_i\neq\mu_j, \\
    \mu_i, & \mu_i=\mu_j,
 \end{cases}
\end{align*}
with $\overline{\mu}_{ij}=\mu_i\vee\mu_j$ and $\underline{\mu}_{ij}=\mu_i\wedge\mu_j$.
\end{proposition}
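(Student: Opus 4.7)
The plan is to carry out the proof in two stages: a reduction showing that $\overline{V}(X)$ is attained by mixing only two of the $P_i$'s, and then an explicit one-dimensional optimization yielding the closed-form expression for $\overline{V}_{ij}(X)$.

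\textbf{Reduction to pairs.} By Theorem \ref{vpt}, $\overline{V}(X)=\sup_{P\in\ccp}V_P(X)$. Writing $P=\sum_{i=1}^K\lambda_iP_i$ for $\lambda$ in the unit simplex $\Delta_K$, we must maximize
$$
f(\lambda)=\sum_{i=1}^K\lambda_i\kappa_i-\Bigl(\sum_{i=1}^K\lambda_i\mu_i\Bigr)^2
$$
over $\Delta_K$. This $f$ is continuous and concave (linear minus the square of a linear form) on the compact set $\Delta_K$, so it attains its maximum at some $\lambda^*$. Let $I=\{i:\lambda_i^*>0\}$ and $M^*=\sum_i\lambda_i^*\mu_i$. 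The KKT stationarity conditions produce a constant $c$ such that $\kappa_i-2M^*\mu_i=c$ for every $i\in I$. If all $\mu_i$ with $i\in I$ equal $M^*$, these relations force the $\kappa_i$'s to agree as well, and then $V_{P^*}(X)=V_{P_i}(X)\le\overline{V}_{ij}(X)$ for any single $i\in I$ and any $j$. Otherwise, since $M^*$ is a convex combination with strictly positive weights of the $\mu_k$'s ($k\in I$), not all equal to $M^*$, there must exist $i,j\in I$ with $\mu_i<M^*<\mu_j$. Setting $\alpha=(\mu_j-M^*)/(\mu_j-\mu_i)\in(0,1)$ and $Q=\alpha P_i+(1-\alpha)P_j$, the stationarity relations give $\alpha\kappa_i+(1-\alpha)\kappa_j=c+2(M^*)^2$, hence
$$
V_Q(X)=\alpha\kappa_i+(1-\alpha)\kappa_j-(M^*)^2=c+(M^*)^2=V_{P^*}(X).
$$
Therefore $\overline{V}_{ij}(X)\ge V_Q(X)=\overline{V}(X)$, and since the reverse inequality is trivial we obtain $\overline{V}(X)=\max_{i<j}\overline{V}_{ij}(X)$.

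\textbf{Closed form for $\overline{V}_{ij}$.} Fix $i<j$ and parametrize mixtures $R=\alpha P_i+(1-\alpha)P_j$ by the mean $m=\alpha\mu_i+(1-\alpha)\mu_j$. When $\mu_i=\mu_j$, the map $\alpha\mapsto V_R(X)$ is linear, so the supremum is attained at a vertex and matches the stated formula (note that $h(\mu_i)=\kappa_i-\mu_i^2$). When $\mu_i\ne\mu_j$, $m$ sweeps $[\underline{\mu}_{ij},\overline{\mu}_{ij}]$ and $V_R(X)$ becomes a concave quadratic in $m$; setting its derivative to zero produces the unconstrained maximizer $m^*=(\kappa_j-\kappa_i)/(2(\mu_j-\mu_i))$, and the constrained maximizer is exactly the truncated $\mu_{ij}$ of the proposition. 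A short algebraic identity shows $V_R$ evaluated at $m^*$ equals $h(m^*)$, while at the endpoints $m=\mu_i$ or $\mu_j$ one obtains $\kappa_i-\mu_i^2$ or $\kappa_j-\mu_j^2$. Taking the maximum over these three candidates correctly covers both the interior-optimum and the truncated-boundary cases, yielding the claimed formula.

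\textbf{Main obstacle.} The crux of the argument is the KKT-driven identity $V_Q(X)=V_{P^*}(X)$ in the reduction step: it is exactly this cancellation, enabled by the stationarity relations on the active set, that lets the $K$-probability optimization collapse onto a pairwise one. Beyond this, the pairwise computation is a routine concave quadratic optimization on a closed interval.
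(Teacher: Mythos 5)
Your argument is correct, but it does not follow the paper's route. In fact the paper imports this proposition from Li et al.\ \cite{li2023} without proof, and the machinery it develops for the pairwise reduction (Lemma~\ref{lem_minmax}, Proposition~\ref{lem_ucov} and its corollary) works on the ``primal'' side: it writes $\overline{V}(X)=\min_{\mu\in\br}\max_{1\le i\le K}\{\mu^2-2\mu_i\mu+\kappa_i\}$ and reduces the minimum of an upper envelope of equi-curved parabolas to their pairwise intersection points. You work instead on the ``dual'' side supplied by Theorem~\ref{vpt}: you maximize the concave function $\lambda\mapsto\sum_i\lambda_i\kappa_i-(\sum_i\lambda_i\mu_i)^2$ over the simplex and use KKT stationarity on the active set to collapse the optimal mixture onto two components carrying the same variance. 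Both are sound, and your reduction step (the identity $V_Q(X)=V_{P^*}(X)$) checks out. The trade-off is that your route (i) takes the envelope theorem (hence Sion's minimax theorem) as an input, whereas the parabola argument runs directly from the definition $\min_\mu\pe[(X-\mu)^2]$, and (ii) relies essentially on concavity of the simplex objective, which fails for the covariance analogue $\bbl^T\bbk-\bbl^T\bbm\bbn^T\bbl$ (an indefinite quadratic program); this is why the paper's primal route is the one that extends to Proposition~\ref{lem_ucov} and Theorem~\ref{th1}, while yours is specific to variance.

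One small point to tighten in your closed-form step: when the truncation in $\mu_{ij}$ is active (say $m^*>\overline{\mu}_{ij}=\mu_j>\mu_i$), the constrained maximizer is the endpoint $\mu_j$ and the maximum value is $\kappa_j-\mu_j^2$, but the quantity $h(\mu_{ij})=h(\mu_j)$ appearing in the stated formula is then \emph{not} the value of $V_R$ at that point. You should add the one-line verification
\begin{equation*}
h(\mu_j)-(\kappa_j-\mu_j^2)=2\mu_j(\mu_j-\mu_i)-(\kappa_j-\kappa_i)=2(\mu_j-\mu_i)(\mu_j-m^*)<0,
\end{equation*}
(and its mirror image for $m^*<\underline{\mu}_{ij}$), which shows the third candidate never spuriously dominates the correct endpoint value, so the three-term maximum equals $\overline{V}_{ij}(X)$ in every case.
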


\section{Upper and Lower Covariances}

In this section, we comprehensively examine the definitions and properties of upper and lower covariances, extending  the concepts of upper and lower variances discussed in the previous section.

In alignment with the optimization-based characterization of variance, covariance can be similarly conceptualized as an optimization problem. Indeed, for any real numbers $\mu_i\in\mathbb{R},\ i=1,2$ , it holds that
  \begin{align*}
    C_{P}(X,Y)&={E}_{P}[(X-{E}_{P}[X])(Y-{E}_{P}[Y])]\\
    &={E}_{P}[(X-\mu_1 +\mu_1 -{E}_{P}[X])(Y-\mu_2+\mu_2-{E}_{P}[Y])]\\
    &={E}_{P}[(X-\mu_1)(Y-\mu_2)]-(\mathbb{E}_{P}[X]-\mu_1)({E}_{P}[Y]-\mu_2).
  \end{align*}
Then we have
\[ C_P(X,Y)+(E_P[X]-\mu_1)(E_P[Y]-\mu_2)=E_P[(X-\mu_1)(Y-\mu_2)].
  \]
The second term of left side in the expression in $(\mu_1,\mu_2)$ is a saddle surface and the saddle point is $(E_P[X],E_P[Y])$.
Then the covariance of $X$ and $Y$ can be written as
\begin{align}
    C_P(X,Y)&=\min_{\mu_1\in \br}\max_{\mu_2\in \br}E_P[(X-\mu_1)(Y-\mu_2)]\label{eq_cov1}\\
    &=\max_{\mu_2\in \br}\min_{\mu_1\in \br}E_P[(X-\mu_1)(Y-\mu_2)].\label{eq_cov2}
\end{align}

\par We can now proceed to provide the formal definitions of upper and lower covariance within the framework of sublinear expectation.

\begin{definition}\label{def_cov}
  For two random variables $X$ and $Y$ on sublinear expectation space $(\Omega,\mathcal{F},\pe)$ with $\pe[X^2]+\pe[Y^2]<\infty$, we define  the upper covariance of $X$ and $Y$ as
   $$\overline{C}(X,Y):=\max_{\mu_2\in M_Y}\min_{\mu_1\in M_X}\pe[(X-\mu_1)(Y-\mu_2)],$$
   and the lower covariance of $X$ and $Y$ as
   $$\underline{C}(X,Y):=\min_{\mu_2\in M_Y}\max_{\mu_1\in M_X}\left(-\pe[-(X-\mu_1)(Y-\mu_2)]\right),$$
 where $M_X=[\underline{\mu}_X,\overline{\mu}_X]$ and $M_Y=[\underline{\mu}_Y,\overline{\mu}_Y]$.
\end{definition}

Unlike the covariance can be represented by the maximin optimization or the minimax optimization equivalently in \cref{eq_cov1} and \cref{eq_cov2}, the following examples shows that the maximin (resp. minimax) optimization for upper (resp. lower) covariance in Definition \ref{def_cov} is not equal to the minimax (resp. maximin) optimization.

\begin{example}\label{em2}
  Let $(X,Y)\sim N(-1,0,1,1,1)$ under $P_1$ and $(X,Y)\sim N(0,1,1,1,1)$ under $P_2$. We consider $\cp=\{P_1,P_2\}$ and define function $f$ as
  $$f(\mu_1,\mu_2)=\pe[(X-\mu_1)(Y-\mu_2)].$$
  By simple calculation, we obtain
  \begin{equation*}
    f(\mu_1,\mu_2)=
    \begin{cases}
      1+(\mu_1+1)\mu_2,&\mu_1+\mu_2\ge 0\\
      1+\mu_1(\mu_2-1),&\mu_1+\mu_2<0
    \end{cases}.
  \end{equation*}
  Then we have
   $$ \overline{C}(X,Y)=\max_{\mu_2\in M_Y}\min_{\mu_1\in M_X} f(\mu_1,\mu_2)=\frac{5}{4}.$$
  But it is easy to check that
  $$\min_{\mu_1\in M_X}\max_{\mu_2\in M_Y} f(\mu_1,\mu_2)=\frac{3}{2}\neq\overline{C}(X,Y).$$
\end{example}

\begin{example}\label{emem3}
  Let $(X,Y)\sim N(-1,0,1,1,1)$ under $P_1$ and $(X,Y)\sim N(0,-1,1,1,1)$ under $P_2$, we define function $g$ as
  \begin{equation*}
    g(\mu_1,\mu_2)=-\pe[-(X-\mu_1)(Y-\mu_2)]=
    \begin{cases}
      1+(\mu_1+1)\mu_2,&-\mu_1+\mu_2\le 0\\
      1+\mu_1(\mu_2+1),&-\mu_1+\mu_2>0
    \end{cases}.
  \end{equation*}
  Then we have
   $$ \underline{C}(X,Y)=\min_{\mu_2\in M_Y}\max_{\mu_1\in M_X} g(\mu_1,\mu_2)=\frac{3}{4}.$$
  On the other hand,
  $$ \max_{\mu_2\in M_Y}\min_{\mu_1\in M_X} g(\mu_1,\mu_2)=\frac{1}{2}\neq \underline{C}(X,Y).$$
\end{example}

\begin{remark}
 Due to the strict convexity of $\pe[(X-\mu)^2]$ in $\mu$, there exists a unique $\mu^*\in M_X$ such that $\overline{V}(X)=\pe[(X-\mu^*)^2]$. But such uniqueness does not hold for the case of upper covariance. In Example \ref{emem3}, there exists $(\mu_1^*,\mu_2^*)=(0,0)$ and $(-1,-1)$ such that
 $$\overline{C}(X,Y)=\pe[(X-\mu_1^*)(Y-\mu_2^*)]=1.$$
\end{remark}

A natural question is that why we choose maximin optimization as the definition of upper covariance? One of reasons is due to the following envelope theorem, which generalizes the envelope theorem for variance (see Theorem \ref{vpt}). The other reason can be found in Remark \ref{r31}.

\begin{theorem}\label{prop_rt}
  For two random variables $X$ and $Y$ on the sublinear expectation space $(\Omega,\mathcal{F},\pe)$ with $\pe[X^2]+\pe[Y^2]<\infty$, we have
  \begin{itemize}
    \item [(1)]$\overline{C}(X,Y)=\sup_{P\in\ccp}C_P(X,Y).$
    \item [(2)]$\underline{C}(X,Y)=\inf_{P\in\ccp}C_P(X,Y).$
  \end{itemize}
\end{theorem}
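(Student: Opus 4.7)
The plan is to mirror the proof of Theorem~\ref{vpt}, using Sion's minimax theorem to swap the supremum/infimum over $P$ with the inner extremization over $\mu_1$. I would start by recording, for each $P\in\ccp$, that the saddle-point identities \eqref{eq_cov1}--\eqref{eq_cov2} remain valid when the domains are restricted to $M_X$ and $M_Y$: since $E_P[X]\in M_X$ and $E_P[Y]\in M_Y$, the saddle point $(E_P[X],E_P[Y])$ lies in $M_X\times M_Y$, so that
\begin{equation*}
C_P(X,Y)=\max_{\mu_2\in M_Y}\min_{\mu_1\in M_X}E_P[(X-\mu_1)(Y-\mu_2)]=\min_{\mu_2\in M_Y}\max_{\mu_1\in M_X}E_P[(X-\mu_1)(Y-\mu_2)].
\end{equation*}

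For part~(1), the inequality $\overline{C}(X,Y)\ge\sup_{P\in\ccp}C_P(X,Y)$ is immediate from $\pe[\,\cdot\,]\ge E_P[\,\cdot\,]$ combined with the identity above. For the reverse direction, I would fix $\mu_2\in M_Y$ and apply Sion's theorem to $h(\mu_1,P)=E_P[(X-\mu_1)(Y-\mu_2)]$ on $M_X\times\ccp$, exploiting that $M_X$ is compact convex in $\mathbb{R}$, $\ccp$ is convex, and $h$ is affine (hence both quasi-convex and quasi-concave, and continuous) in each variable separately. This produces
\begin{equation*}
\min_{\mu_1\in M_X}\sup_{P\in\ccp}h(\mu_1,P)=\sup_{P\in\ccp}\min_{\mu_1\in M_X}h(\mu_1,P),
\end{equation*}
and commuting the outer $\max_{\mu_2}$ with $\sup_P$ (trivial since both are suprema) yields $\overline{C}(X,Y)=\sup_{P\in\ccp}C_P(X,Y)$.

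Part~(2) is entirely symmetric: one writes $-\pe[-(X-\mu_1)(Y-\mu_2)]=\inf_{P\in\ccp}E_P[(X-\mu_1)(Y-\mu_2)]$, applies Sion in its $\sup\inf=\inf\sup$ form to interchange $\max_{\mu_1}$ with $\inf_P$ for each fixed $\mu_2$, and then commutes $\min_{\mu_2}$ with $\inf_P$. The main technical hurdle throughout is verifying the hypotheses of Sion cleanly, specifically the semi-continuity in $P$; this collapses into a triviality because $h$ is affine in $P$ and hence continuous under any topology that makes the linear evaluations $P\mapsto E_P[(X-\mu_1)(Y-\mu_2)]$ continuous. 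The conceptual point worth highlighting is why the strict asymmetry between $\max\min$ and $\min\max$ displayed in Examples~\ref{em2} and~\ref{emem3} does not obstruct the argument: Sion is invoked only between an extremization over $\mu_1$ and the linear supremum/infimum over $P$, never between the two outer extremizations---which is precisely why Definition~\ref{def_cov} is set up with the specific max-min (resp.\ min-max) ordering.
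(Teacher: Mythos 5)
Your proposal is correct and follows essentially the same route as the paper: the key step in both is Sion's minimax theorem applied to the inner pair $(\mu_1,P)$ on $M_X\times\ccp$ (using linearity/affinity in each variable and compact convexity of $M_X$), followed by the trivial commutation of the outer $\max_{\mu_2}$ with $\sup_P$ and the observation that the saddle point $(E_P[X],E_P[Y])$ lies in $M_X\times M_Y$. The only cosmetic difference is that you split off the easy inequality $\overline{C}(X,Y)\ge\sup_{P\in\ccp}C_P(X,Y)$ separately, whereas the paper writes the whole argument as one chain of equalities.
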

\begin{proof} We only prove the upper covariance case, the lower covariance case follows by the same method.

  Since the functional $E_P[(X-\mu_1)(Y-\mu_2)]$ is a linear function for $\mu_1$ and $P$, $M_X$ is closed and convex, by minimax theorem in Sion \cite{sion1958general}, we obtain
  \begin{align*}
    \overline{C}(X,Y)&=\max_{\mu_2\in M_Y}\min_{\mu_1\in M_X}\sup_{P\in \ccp}E_P[(X-\mu_1)(Y-\mu_2)]\\
    &=\max_{\mu_2\in M_Y}\sup_{P\in \ccp}\min_{\mu_1\in M_X}E_P[(X-\mu_1)(Y-\mu_2)]\\
    &=\sup_{P\in \ccp}\max_{\mu_2\in M_Y}\min_{\mu_1\in M_X}E_P[(X-\mu_1)(Y-\mu_2)]\\
    &=\sup_{P\in \ccp}C_P(X,Y).
  \end{align*}
\end{proof}
\begin{remark}
Theorem \ref{prop_rt} can be regarded as the definition of upper and lower covariances under multiple probability measures.
\end{remark}

We list the properties of upper and lower covariances.

\begin{proposition}\label{prop_property}
 For  random variables $X,Y$ and $Z$ on the sublinear expectation space $(\Omega,\mathcal{F},\pe)$ with $\pe[X^2]+\pe[Y^2]+\pe[Z^2]<\infty$, we have,
 \begin{itemize}
   \item [(1)] Symmetry: $\overline{C}(X,Y)=\overline{C}(Y,X), \ \ \ \underline{C}(X,Y)=\underline{C}(Y,X).$
   \item [(2)] Translation invariant: $\overline{C}(X+a,Y+b)=\overline{C}(X,Y), \ \ \ \underline{C}(X+a,Y+b)=\underline{C}(X,Y), \forall a,b\in\br$.
   \item[(3)] Sub-additivity: $\overline{C}(X+Y,Z)\leq\overline{C}(X,Z)+\overline{C}(Y,Z)$.
   \item[(4)] Sup-additivity: $\underline{C}(X+Y,Z)\geq\underline{C}(X,Z)+\underline{C}(Y,Z)$.
   \item [(5)] Positive homogeneity: $\forall a,b\in\br, ab\geq 0$,
   $$\overline{C}(aX,bY)=ab\overline{C}(X,Y),\ \ \ \underline{C}(aX,bY)=ab\underline{C}(X,Y).$$
   \item [(6)] Negative coupling: $\forall a,b\in\br, ab\le 0$,
   $$\overline{C}(aX,bY)=ab\underline{C}(X,Y).$$
   In particular,
   $$\underline{C}(X,Y)=-\overline{C}(-X,Y)=-\overline{C}(X,-Y).$$
\end{itemize}
\end{proposition}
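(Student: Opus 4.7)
The plan is to leverage the envelope representation just proved in Theorem~\ref{prop_rt}, which tells us that $\overline{C}(X,Y)=\sup_{P\in\ccp}C_P(X,Y)$ and $\underline{C}(X,Y)=\inf_{P\in\ccp}C_P(X,Y)$. Under this representation each item reduces to a classical identity or inequality for $C_P(\cdot,\cdot)$ followed by a standard sup/inf manipulation, so I would not work directly with the max-min-max and min-max-min forms of Definition~\ref{def_cov}.

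First I would dispose of the easy identities (1), (2), (5) by pointwise symmetry, translation invariance, and bilinearity of the classical covariance: $C_P(X,Y)=C_P(Y,X)$, $C_P(X+a,Y+b)=C_P(X,Y)$, and $C_P(aX,bY)=ab\,C_P(X,Y)$. Taking $\sup_{P\in\ccp}$ (resp.\ $\inf_{P\in\ccp}$) of both sides gives the upper (resp.\ lower) version; for (5) one needs $a,b$ to have the same sign so that the scalar $ab\ge0$ passes through the supremum/infimum without reversing. For (6), the scalar $ab\le 0$ is nonpositive, so $\sup_P ab\,C_P(X,Y)=ab\inf_P C_P(X,Y)=ab\,\underline{C}(X,Y)$, which yields $\overline{C}(aX,bY)=ab\,\underline{C}(X,Y)$, and specializing $(a,b)=(-1,1)$ or $(1,-1)$ gives the stated dual identities.

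For the sub-additivity and sup-additivity items (3) and (4), the core ingredient is the classical bilinearity $C_P(X+Y,Z)=C_P(X,Z)+C_P(Y,Z)$, which holds for each $P\in\ccp$. Then for (3) I would write
\[
\overline{C}(X+Y,Z)=\sup_{P\in\ccp}\bigl(C_P(X,Z)+C_P(Y,Z)\bigr)\le\sup_{P\in\ccp}C_P(X,Z)+\sup_{P\in\ccp}C_P(Y,Z)=\overline{C}(X,Z)+\overline{C}(Y,Z),
\]
using the elementary fact $\sup(f+g)\le\sup f+\sup g$; item (4) is the dual, using $\inf(f+g)\ge\inf f+\inf g$.

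I do not anticipate a serious obstacle: the only thing to watch is the sign bookkeeping in (5)--(6), where one must justify pulling the nonnegative (resp.\ nonpositive) constant $ab$ through $\sup$ or $\inf$ and, in the nonpositive case, converting $\sup$ into $\inf$. The finiteness hypothesis $\pe[X^2]+\pe[Y^2]+\pe[Z^2]<\infty$ ensures that all classical covariances involved are well-defined and finite via the Cauchy--Schwarz inequality $|C_P(X,Y)|\le V_P(X)^{1/2}V_P(Y)^{1/2}\le \pe[X^2]^{1/2}\pe[Y^2]^{1/2}$, so the sup/inf operations are over a bounded family and the arithmetic of sup/inf is unambiguous.
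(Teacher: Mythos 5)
Your proposal is correct and follows essentially the same route as the paper: both reduce every item to the corresponding classical identity for $C_P$ via the envelope representation of Theorem~\ref{prop_rt} and then apply standard sup/inf manipulations, including the sign argument that converts $\sup$ into $\inf$ for item (6). Your added remark on finiteness via Cauchy--Schwarz is a harmless elaboration the paper leaves implicit.
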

\begin{proof}
  By Theorem \ref{prop_rt}, we immediately obtain (1) and (2).

  For (3), we have
\begin{align*}
    \overline{C}(X+Y,Z)&=\sup_{P\in\ccp}C_P(X+Y,Z)\\
    &=\sup_{P\in\ccp}(C_P(X,Z)+C_P(Y,Z))\leq\overline{C}(X,Z)+\overline{C}(Y,Z),
\end{align*}
(4) can be proved similarly.

  For (5), we have
    $$ \overline{C}(aX,bY)=\sup_{P\in\ccp}C_{P}(aX,bY)=\sup_{P\in\ccp}(abC_P(X,Y))=ab\overline{C}(X,Y),$$

   and for (6), we can obtain that
  $$ \overline{C}(aX,bY)=\sup_{P\in\ccp}C_{P}(aX,bY)=-ab\sup_{P\in\ccp}(-C_P(X,Y))=ab\underline{C}(X,Y).$$
\end{proof}

The following proposition provides the inequalities for variance and covariance.
\begin{proposition}\label{p38}
  Let $X$ and $Y$ be two random variables on the sublinear expectation space $(\Omega,\mathcal{F},\pe)$ with $\pe[X^2]+\pe[Y^2]<\infty$. Then
  \begin{itemize}
  \item [(1)] $\overline{V}(X+Y)\leq\overline{V}(X)+\overline{V}(Y)+2\overline{C}(X,Y).$
  \item [(2)] $\underline{V}(X+Y)\geq\underline{V}(X)+\underline{V}(Y)+2\underline{C}(X,Y).$
  \item [(3)] $\underline{V}\lt(\frac {X+Y}2\rt)-\overline{V}\lt(\frac {X-Y}2\rt)\le \underline{C}(X,Y)\le\overline{C}(X,Y)\le \overline{V}\lt(\frac{X+Y}{2}\rt)-\underline{V}\lt(\frac{X-Y}2\rt).$
  \item [(4)] $\lt|\overline{C}(X,Y)\rt|\le \sqrt{\overline{V}(X)\overline{V}(Y)}$.
  \end{itemize}
\end{proposition}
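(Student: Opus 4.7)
The plan is to reduce each of the four inequalities to its classical single-probability counterpart and then invoke the envelope theorems (Theorem \ref{vpt} for variance and Theorem \ref{prop_rt} for covariance), together with the elementary facts that $\sup$ is sub-additive and $\inf$ is super-additive.

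For (1) and (2), I would begin from the classical identity $V_P(X+Y) = V_P(X) + V_P(Y) + 2C_P(X,Y)$, valid for every $P\in\ccp$. Applying $\sup_{P\in\ccp}$ to both sides and using the sub-additivity of $\sup$ yields (1); applying $\inf_{P\in\ccp}$ and using the super-additivity of $\inf$ yields (2). Both steps are one-line consequences of Theorems \ref{vpt} and \ref{prop_rt}.

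For (3), the crucial ingredient is the polarization identity
\[
  C_P(X,Y) = V_P\Bigl(\tfrac{X+Y}{2}\Bigr) - V_P\Bigl(\tfrac{X-Y}{2}\Bigr),
\]
which one verifies at once by expanding both variances. Taking $\sup_{P\in\ccp}$ of both sides and estimating $\sup_P(a_P - b_P) \le \sup_P a_P - \inf_P b_P$ produces the upper bound on $\overline{C}(X,Y)$; taking $\inf_{P\in\ccp}$ and estimating $\inf_P(a_P - b_P) \ge \inf_P a_P - \sup_P b_P$ produces the lower bound on $\underline{C}(X,Y)$. The middle inequality $\underline{C}(X,Y) \le \overline{C}(X,Y)$ is immediate from $\inf \le \sup$ applied to the family $\{C_P(X,Y):P\in\ccp\}$.

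For (4), the classical Cauchy--Schwarz inequality gives $|C_P(X,Y)| \le \sqrt{V_P(X)\,V_P(Y)} \le \sqrt{\overline{V}(X)\,\overline{V}(Y)}$ for every $P\in\ccp$. Taking $\sup_P$ immediately yields $\overline{C}(X,Y) \le \sqrt{\overline{V}(X)\,\overline{V}(Y)}$, while taking $\inf_P$ yields $\underline{C}(X,Y) \ge -\sqrt{\overline{V}(X)\,\overline{V}(Y)}$; combining the latter with $\underline{C}(X,Y) \le \overline{C}(X,Y)$ from (3) gives $-\overline{C}(X,Y) \le \sqrt{\overline{V}(X)\,\overline{V}(Y)}$, which together with the first bound produces the absolute-value estimate. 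I do not anticipate a real obstacle: once the envelope theorems are in hand, each item is essentially a pointwise-in-$P$ classical fact pushed through $\sup$ or $\inf$; the only care needed is the sign bookkeeping for $|\overline{C}(X,Y)|$ in (4).
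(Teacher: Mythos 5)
Your proposal is correct and follows essentially the same route as the paper: items (1)--(2) via the classical decomposition $V_P(X+Y)=V_P(X)+V_P(Y)+2C_P(X,Y)$ pushed through $\sup$/$\inf$ using the envelope theorems, item (3) via the polarization identity $C_P(X,Y)=V_P\bigl(\tfrac{X+Y}{2}\bigr)-V_P\bigl(\tfrac{X-Y}{2}\bigr)$ with the estimate $\sup_P(a_P-b_P)\le\sup_P a_P-\inf_P b_P$, and item (4) via Cauchy--Schwarz under each $P$. The only cosmetic difference is in (4), where the paper bounds $|\overline{C}(X,Y)|\le\sup_{P}|C_P(X,Y)|$ directly while you handle the two signs separately; both are valid.
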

\begin{proof}
By Theorem \ref{vpt} and Theorem \ref{prop_rt}, we have
\begin{align*}
\overline{V}(X+Y)&=\sup_{P\in\ccp}V_P(X+Y)\\
&=\sup_{P\in\ccp}(V_P(X)+V_P(Y)+2C_P(X+Y))\\
&\leq\overline{V}(X)+\overline{V}(Y)+2\overline{C}(X,Y).
\end{align*}
Thus (1) holds. (2) is proved in the similar way.

For (3), it is clear that $\underline{C}(X,Y)\le\overline{C}(X,Y)$ and
\begin{align*}
\overline{C}(X,Y)&=\sup_{P\in\ccp}C_P(X,Y)\\
&=\sup_{P\in\ccp}\lt(V_P\lt(\frac{X+Y}{2}\rt)-V_P\lt(\frac{X-Y}{2}\rt)\rt)\\
&\leq\overline{V}\lt(\frac{X+Y}{2}\rt)-\underline{V}\lt(\frac{X-Y}{2}\rt).
\end{align*}
where the  identity $C_P(X,Y)=V_P\lt(\frac{X+Y}{2}\rt)-V_P\lt(\frac{X-Y}{2}\rt)$ has been utilized by Gnanadesikan and Ketternring \cite{GK}.

The proof for the rest inequality in (3) is similar.

  For (4),  we can obtain by
  $$ |\overline{C}(X,Y)|\leq\sup_{P\in\ccp}|C_{P}(X,Y)|\le \sup_{P\in\ccp}\sqrt{V_P(X)V_P(Y)}\le \sqrt{\overline{V}(X)\overline{V}(Y)}. $$
\end{proof}

\begin{remark}\label{r37}
If both $X$ and $Y$ have non-empty mean-uncertainty, i.e., $\pe[X]>-\pe[-X]$ and $\pe[Y]>-\pe[-Y]$, then we consider
$$\tilde{X}=\frac{X+\pe[-X]}{\Delta_X}, \ \ \ \tilde{Y}=\frac{Y+\pe[-Y]}{\Delta_Y},$$
where $\Delta_X=\pe[X]+\pe[-X]$ and $\Delta_Y=\pe[Y]+\pe[-Y]$.

It is easy to see that $M_{\tilde{X}}=M_{\tilde{Y}}=[0,1]$ and
$$\overline{C}(X,Y)=\frac{\overline{C}(\tilde{X},\tilde{Y})}{\Delta_X\Delta_Y}, \ \ \ \underline{C}(X,Y)=\frac{\underline{C}(\tilde{X},\tilde{Y})}{\Delta_X\Delta_Y}. $$
Thus we can approximately calculate  the upper covariance on the fixed interval $[0,1]\times[0,1]$ by exhaustive search on the domain $D_N=\{(\mu_i,\mu_j):\ \mu_i=\frac{i}{N},\ \mu_j=\frac{j}{N}, 0\leq i,j\leq N\}$ for large $N$.
\end{remark}

The following proposition shows that the bounded domains $M_X$ and $M_Y$ in the Definition \ref{def_cov} can be replaced by $\br$.

\begin{proposition}\label{pp310}
In Definition \ref{def_cov}, initially formulated within a bounded interval, can  extend to  the real space:
\begin{align}
\overline{C}(X,Y)&=\sup_{\mu_2\in \br}\inf_{\mu_1\in \br}\pe[(X-\mu_1)(Y-\mu_2)], \label{eq3-1}\\
\underline{C}(X,Y)&=\inf_{\mu_2\in \br}\sup_{\mu_1\in \br}\left(-\pe[-(X-\mu_1)(Y-\mu_2)]\right).\label{eq3-2}
\end{align}
\end{proposition}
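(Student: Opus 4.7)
The plan is to prove \cref{eq3-1} for $\overline{C}$ by a two-sided inequality argument, and then deduce \cref{eq3-2} for $\underline{C}$ from Proposition \ref{prop_property}(6).

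For the $\le$ direction of \cref{eq3-1}, I would combine the envelope identity in Theorem \ref{prop_rt} with the classical maximin representation of covariance in \cref{eq_cov2}:
\begin{align*}
\overline{C}(X,Y) &= \sup_{P\in\ccp} C_P(X,Y) = \sup_{P\in\ccp}\sup_{\mu_2\in\br}\inf_{\mu_1\in\br} E_P[(X-\mu_1)(Y-\mu_2)] \\
&= \sup_{\mu_2\in\br}\sup_{P\in\ccp}\inf_{\mu_1\in\br} E_P[(X-\mu_1)(Y-\mu_2)].
\end{align*}
Applying the elementary weak-duality inequality $\sup_{P}\inf_{\mu_1}\le\inf_{\mu_1}\sup_{P}$ to the inner two operations and recognising $\sup_{P} E_P[\,\cdot\,]=\pe[\,\cdot\,]$ then yields $\overline{C}(X,Y)\le\sup_{\mu_2\in\br}\inf_{\mu_1\in\br}\pe[(X-\mu_1)(Y-\mu_2)]$.

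The reverse direction hinges on a coercivity claim: for every $\mu_2\notin M_Y$, the inner infimum equals $-\infty$. Fix $\mu_2>\overline{\mu}_Y$ and set $c=\mu_2-\overline{\mu}_Y>0$, so that $\mu_2-E_P[Y]\ge c$ for every $P\in\ccp$. The second-moment hypothesis $\pe[X^2]+\pe[Y^2]<\infty$ together with Cauchy--Schwarz supplies a constant $K$ with $|E_P[X(Y-\mu_2)]|\le K$ uniformly in $P$. Choosing $\mu_1=-M$ with $M>0$,
\[
\pe[(X+M)(Y-\mu_2)] = \sup_{P\in\ccp}\bigl\{E_P[X(Y-\mu_2)]-M(\mu_2-E_P[Y])\bigr\} \le K-Mc \longrightarrow -\infty,
\]
and the case $\mu_2<\underline{\mu}_Y$ is handled symmetrically. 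Consequently the outer supremum over $\br$ collapses to $\sup_{\mu_2\in M_Y}\inf_{\mu_1\in\br}\pe[(X-\mu_1)(Y-\mu_2)]$. Since enlarging the domain of an infimum can only make it smaller, $\inf_{\mu_1\in\br}\pe[\,\cdot\,]\le\min_{\mu_1\in M_X}\pe[\,\cdot\,]$ trivially, and this latter supremum is in turn bounded above by $\sup_{\mu_2\in M_Y}\min_{\mu_1\in M_X}\pe[(X-\mu_1)(Y-\mu_2)]=\overline{C}(X,Y)$, closing the loop.

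Finally, \cref{eq3-2} follows from applying the just-established \cref{eq3-1} to the pair $(-X,Y)$ and invoking $\underline{C}(X,Y)=-\overline{C}(-X,Y)$ from Proposition \ref{prop_property}(6), with the change of variable $\mu_1\mapsto -\mu_1$ turning the sign in $(-X-\mu_1)$ into the outer minus sign attached to $\pe[-\,\cdot\,]$ that appears on the right-hand side. I expect the principal technical obstacle to be the coercivity estimate: the Cauchy--Schwarz bound $|E_P[X(Y-\mu_2)]|\le\sqrt{\pe[X^2]\,\pe[(Y-\mu_2)^2]}$ must be uniform in $P\in\ccp$ rather than $P$-dependent, and this uniformity is exactly what the sublinear second-moment assumption supplies.
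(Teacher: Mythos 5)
Your proof is correct, and its first half (establishing $\overline{C}(X,Y)\le\sup_{\mu_2\in\br}\inf_{\mu_1\in\br}\pe[(X-\mu_1)(Y-\mu_2)]$ via Theorem \ref{prop_rt}, the classical maximin representation of $C_P$, and the weak-duality inequality $\sup_{P}\inf_{\mu_1}\le\inf_{\mu_1}\sup_{P}$) coincides with the paper's argument. Where you genuinely diverge is in the reverse inequality, specifically in how $\mu_2\notin M_Y$ is disposed of. The paper first replaces $\inf_{\mu_1\in\br}$ by $\min_{\mu_1\in M_X}$, applies Sion's minimax theorem a second time to write $h(\mu_2)=\min_{\mu_1\in M_X}\pe[(X-\mu_1)(Y-\mu_2)]=\sup_{P\in\ccp}\min_{\mu_1\in M_X}\bigl(C_P(X,Y)+(E_P[X]-\mu_1)(E_P[Y]-\mu_2)\bigr)$, and then for $\mu_2>\ou_Y$ (resp.\ $\mu_2<\lu_Y$) chooses the endpoint $\mu_1=\lu_X$ (resp.\ $\mu_1=\ou_X$) so that the product term is nonpositive uniformly in $P$, yielding $h(\mu_2)\le\overline{C}(X,Y)$. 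You instead show that for such $\mu_2$ the \emph{unconstrained} infimum over $\mu_1\in\br$ is $-\infty$ by a coercivity estimate, $\pe[(X+M)(Y-\mu_2)]\le K-Mc$, with $K$ a Cauchy--Schwarz bound that is indeed uniform over $\ccp$ because $E_P[X^2]\le\pe[X^2]$ and $E_P[(Y-\mu_2)^2]\le\pe[(Y-\mu_2)^2]<\infty$ for every convex combination $P$; hence those $\mu_2$ contribute nothing to the outer supremum and the trivial bound $\inf_{\mu_1\in\br}\le\min_{\mu_1\in M_X}$ finishes the job on $M_Y$. Both mechanisms are valid: yours avoids a second invocation of the minimax theorem and is more elementary, while the paper's sign argument additionally establishes the intermediate identity $\sup_{\mu_2\in\br}\min_{\mu_1\in M_X}\pe[(X-\mu_1)(Y-\mu_2)]=\overline{C}(X,Y)$, i.e.\ that the collapse to $M_Y$ persists even after $\mu_1$ has been confined to $M_X$, where the infimum is never $-\infty$. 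Your deduction of the second identity from $\underline{C}(X,Y)=-\overline{C}(-X,Y)$ with the substitution $\mu_1\mapsto-\mu_1$ is exactly the paper's closing step.
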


\begin{proof}
Firstly, we have
\begin{align*}
\sup_{\mu_2\in \br}\inf_{\mu_1\in \br}\pe[(X-\mu_1)(Y-\mu_2)]&=\sup_{\mu_2\in \br}\inf_{\mu_1\in \br}\sup_{P\in\ccp}E_P[(X-\mu_1)(Y-\mu_2)]\\
&\geq\sup_{\mu_2\in \br}\sup_{P\in\ccp}\inf_{\mu_1\in \br}E_P[(X-\mu_1)(Y-\mu_2)]\\
&=\sup_{P\in\ccp}\sup_{\mu_2\in \br}\inf_{\mu_1\in \br}E_P[(X-\mu_1)(Y-\mu_2)]\\
&=\sup_{P\in\ccp}C_P(X,Y)=\overline{C}(X,Y).
\end{align*}
Secondly, it is clear that
$$\sup_{\mu_2\in \br}\inf_{\mu_1\in \br}\pe[(X-\mu_1)(Y-\mu_2)]\leq\sup_{\mu_2\in \br}\min_{\mu_1\in M_X}\pe[(X-\mu_1)(Y-\mu_2)].$$
We claim that
\begin{equation*}\label{eq3}
\sup_{\mu_2\in \br}\min_{\mu_1\in M_X}\pe[(X-\mu_1)(Y-\mu_2)]=\max_{\mu_2\in M_Y}\min_{\mu_1\in M_X}\pe[(X-\mu_1)(Y-\mu_2)]=\overline{C}(X,Y).
\end{equation*}
Indeed, let $h(\mu_2)=\min_{\mu_1\in M_X}\pe[(X-\mu_1)(Y-\mu_2)]$, then
\begin{align*}
h(\mu_2)&=\min_{\mu_1\in M_X}\sup_{P\in\ccp}E_P[(X-\mu_1)(Y-\mu_2)]\\
&=\sup_{P\in\ccp}\min_{\mu_1\in M_X}E_P[(X-\mu_1)(Y-\mu_2)]\\
&=\sup_{P\in\ccp}\min_{\mu_1\in M_X}\lt(C_P(X,Y)+(E_P[X]-\mu_1)(E_P[Y]-\mu_2)\rt).
\end{align*}
If $\mu_2>\ou_Y$ (resp. $\mu_2<\lu_Y$), then we take $\mu_1=-\pe[-X]$ (resp. $\mu_1=\pe[X]$) such that
$$(E_P[X]-\mu_1)(E_P[Y]-\mu_2)\leq 0,\ \ \forall P\in\ccp,$$
thus we obtain
$$h(\mu_2)\leq\sup_{P\in\ccp}C_P(X,Y)=\overline{C}(X,Y).$$
Finally, we prove that \cref{eq3-1} holds.

By (4) in Proposition \ref{prop_property}, $\underline{C}(X,Y)=-\overline{C}(-X,Y)$, thus \cref{eq3-2} holds.
\end{proof}

\begin{remark}\label{r31}

In fact, the operators $\sup$ and $\inf$ in Proposition \ref{pp310} can be replaced by $\max$ and $\min$ respectively.

 Furthermore, for any intervals $T_X\supseteq M_X$ and $T_Y\supseteq M_Y$, we have
$$\overline{C}(X,Y)=\max_{\mu_1\in T_X}\min_{\mu_2\in T_Y}\pe[(X-\mu_1)(Y-\mu_2)],$$
$$\underline{C}(X,Y)=\min_{\mu_1\in T_X}\max_{\mu_2\in T_Y}\lt(-\pe[-(X-\mu_1)(Y-\mu_2)]\rt).$$

We point out that, in Example \ref{em2}, we have
$$\min_{\mu_1\in\br}\max_{\mu_2\in\br} f(\mu_1,\mu_2)=+\infty.$$
{thus $\min\max$ operator is not so good to define upper covariance.}

\end{remark}

In our exploration of covariance and variance, we delve into the noteworthy implications arising from the equation $Y = X$. It's obvious that the upper (respectively, lower) covariance should align with the upper (respectively, lower) variance if $Y=X$ as in the classical probability theory. However, the direct relationship between these concepts isn't immediately apparent from Definition \ref{def_cov}. This gap in clarity underscores the value of considering an alternative definition of covariance, which we will introduce to not only clarify this relationship but also facilitate the computation of both upper and lower covariance, as will be further elaborated in Section 4.

The covariance $C_P(X,Y)$ defined in \cref{cov} can be written as the difference of two variances. Combined with \cref{eq1}, we have, $\forall \alpha,\beta\in\br$,
\begin{equation}\label{eq_cov_2}
    \begin{split}
       C_P(X,Y)=&V_P\lt(\frac{X+Y}{2}\rt)-V_P\lt(\frac{X-Y}{2}\rt)\\
               =&E_P\lt[\lt(\frac{X+Y}{2}-\alpha\rt)^2-\lt(\frac{X-Y}{2}-\beta\rt)^2\rt]\\
            &+\lt(E_P\lt[\frac{X+Y}{2}\rt]-\alpha\rt)^2-\lt(E_P\lt[\frac{X-Y}{2}\rt]-\beta\rt)^2.
    \end{split}
\end{equation}

For simplicity, we denote $U=\frac{X+Y}{2}$ and $V=\frac{X-Y}{2}$.
Thus we obtain
\begin{align*}
C_P(X,Y)&=\min_{\alpha\in\br}\max_{\beta\in\br}E_P[(U-\alpha)^2-(V-\beta)^2]\\
&=\max_{\beta\in\br}\min_{\alpha\in\br}E_P[(U-\alpha)^2-(V-\beta)^2].
\end{align*}

\begin{proposition}\label{pro1}
    Given two random variables $X$ and $Y$ on sublinear expectation space $(\Omega,\mathcal{F},\pe)$ with $\pe[X^2]+\pe[Y^2]<\infty$, we have
      \begin{align*}
        &\overline{C}(X,Y)=\max_{\beta\in \br}\min_{\alpha\in \br}\pe[(U-\alpha)^2-(V-\beta)^2],\\
        &\underline{C}(X,Y)=\min_{\alpha\in \br}\max_{\beta\in \br}\lt(-\pe[-(U-\alpha)^2+(V-\beta)^2]\rt),
      \end{align*}
      where $U=\frac{X+Y}{2}$ and $V=\frac{X-Y}{2}$.\par
In particular, if $Y=X$, then $\overline{C}(X,X)=\overline{V}(X)$ and $\underline{C}(X,X)=\underline{V}(X)$.

\end{proposition}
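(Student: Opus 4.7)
The plan is to derive the upper-covariance formula by taking $\sup_{P\in\ccp}$ of the saddle-point representation of $C_P$ and applying Sion's minimax theorem exactly as in the proof of Theorem \ref{prop_rt}, with the lower-covariance formula handled analogously and the $Y=X$ case read off as a corollary.

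First, I would rewrite \eqref{eq_cov_2} as
$$E_P[(U-\alpha)^2-(V-\beta)^2] = C_P(X,Y) + (E_P[U]-\alpha)^2 - (E_P[V]-\beta)^2,$$
so that the $(\alpha,\beta)$-dependent part on the right is a saddle surface with saddle point $(E_P[U],E_P[V])$. This yields, for every $P\in\ccp$, the two equivalent pointwise representations
$$C_P(X,Y) = \max_\beta\min_\alpha E_P[(U-\alpha)^2-(V-\beta)^2] = \min_\alpha\max_\beta E_P[(U-\alpha)^2-(V-\beta)^2].$$

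Next, using Theorem \ref{prop_rt} together with the max-min representation, I would write
$$\overline{C}(X,Y) = \sup_P C_P(X,Y) = \max_\beta\sup_P\min_\alpha E_P[(U-\alpha)^2-(V-\beta)^2],$$
the last equality being a trivial interchange of two suprema. For fixed $\beta$ the integrand is linear in $P\in\ccp$ and strictly convex in $\alpha$; although $\ccp$ need not be compact, I may replace $\min_{\alpha\in\br}$ by $\min_{\alpha\in M_U}$ without changing the value, since the unconstrained minimizer is $E_P[U]\in M_U$ for each $P$ and pushing $\alpha$ outside $M_U$ strictly increases $(E_P[U]-\alpha)^2$ simultaneously in $P$. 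With this compactness Sion's theorem applies and yields $\sup_P\min_\alpha = \min_\alpha\sup_P$, so the outer $\sup_P$ can be absorbed into $\pe$, producing the claimed formula for $\overline{C}$. For $\underline{C}$ I would use the min-max form of $C_P$ in $\underline{C}(X,Y) = \inf_P C_P(X,Y)$, exchange $\inf_P$ with $\min_\alpha$ trivially, then apply Sion once more to swap $\inf_P\max_\beta$ with $\beta\in M_V$ supplying the compactness; the identity $\inf_P E_P[Z] = -\pe[-Z]$ then gives the stated min-max expression.

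Finally, specializing to $Y=X$ one has $U=X$ and $V\equiv 0$, so $(V-\beta)^2 = \beta^2$ is a deterministic constant that pulls out of $\pe$. The outer $\max_\beta$ (respectively $\min_\alpha\max_\beta$) is then attained at $\beta=0$, and the remaining expression reduces to $\min_\alpha\pe[(X-\alpha)^2] = \overline{V}(X)$ (respectively $\min_\alpha(-\pe[-(X-\alpha)^2]) = \underline{V}(X)$) by Definition \ref{def21} together with the observation following that definition that allows $\mu\in\br$. The main obstacle is justifying the Sion exchange in the absence of a natural compactness on $\ccp$; this is exactly the subtlety already handled in the proof of Theorem \ref{prop_rt}, and it is resolved here by carrying compactness on the Euclidean side $M_U$ or $M_V$ and checking that unconstrained optima over $\alpha$ and $\beta$ always lie there.
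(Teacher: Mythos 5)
Your proposal is correct and follows essentially the same route as the paper: the saddle-point identity \eqref{eq_cov_2}, the envelope theorem (Theorem \ref{prop_rt}), restriction of $\alpha$ and $\beta$ to the compact intervals $M_U$ and $M_V$ (the content of Lemma \ref{lem1}), and Sion's minimax theorem to swap $\sup_P$ with $\min_\alpha$. The only cosmetic difference is the order in which you pass between $\br$ and the compact mean-uncertainty intervals, which does not affect the argument.
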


    This proposition can be regarded as another definition of upper and lower covariances, which can be easily proved by the following two lemmas.

\begin{lemma}\label{lem1}
        The domains of $\alpha$ and $\beta$ in Proposition \ref{pro1} can be replaced by $M_U$ and $M_V$ respectively. Indeed, we have
         $$\max_{\beta\in \br}\min_{\alpha\in \br}\pe[(U-\alpha)^2-(V-\beta)^2]=\max_{\beta\in M_{V}}\min_{\alpha\in M_{U}}\pe[(U-\alpha)^2-(V-\beta)^2],$$
         $$\min_{\alpha\in \br}\max_{\beta\in \br}\lt(-\pe[-(U-\alpha)^2+(V-\beta)^2]\rt)=\min_{\alpha\in M_{U}}\max_{\beta\in M_{V}}\lt(-\pe[-(U-\alpha)^2+(V-\beta)^2]\rt),$$
         where $U=\frac{X+Y}{2}$ and $V=\frac{X-Y}{2}$.
\end{lemma}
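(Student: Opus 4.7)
My plan is to reduce \cref{lem1} to two one-variable monotonicity facts. Setting $F(\alpha,\beta):=\pe[(U-\alpha)^2-(V-\beta)^2]$ and expanding via $F(\alpha,\beta)=\sup_{P\in\ccp}E_P[(U-\alpha)^2-(V-\beta)^2]$, I would first apply the identity of \cref{eq1} twice to rewrite the $P$-integrand as
\begin{equation*}
E_P[(U-\alpha)^2-(V-\beta)^2]=V_P(U)-V_P(V)+(E_P[U]-\alpha)^2-(E_P[V]-\beta)^2,
\end{equation*}
which is a convex parabola in $\alpha$ with vertex $E_P[U]\in M_U$ and a concave parabola in $\beta$ with vertex $E_P[V]\in M_V$.

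Step one: I would show that for every fixed $\beta\in\br$, the map $\alpha\mapsto F(\alpha,\beta)$ is non-decreasing on $[\ou_U,\infty)$ and non-increasing on $(-\infty,\lu_U]$. Fix $\alpha_1>\alpha_2\geq\ou_U$; since $E_P[U]\leq\ou_U\leq\alpha_2<\alpha_1$ for every $P\in\ccp$, the parabola forces $E_P[(U-\alpha_1)^2-(V-\beta)^2]\geq E_P[(U-\alpha_2)^2-(V-\beta)^2]$ for each such $P$. Then for every $P\in\ccp$,
\begin{equation*}
F(\alpha_1,\beta)\geq E_P[(U-\alpha_1)^2-(V-\beta)^2]\geq E_P[(U-\alpha_2)^2-(V-\beta)^2],
\end{equation*}
and taking the supremum over $P$ on the right yields $F(\alpha_1,\beta)\geq F(\alpha_2,\beta)$. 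A symmetric argument on $(-\infty,\lu_U]$ then gives $\inf_{\alpha\in\br}F(\alpha,\beta)=\min_{\alpha\in M_U}F(\alpha,\beta)$.

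Step two: the analogous parabola argument in $\beta$ shows that for each fixed $\alpha\in M_U$, the map $\beta\mapsto F(\alpha,\beta)$ is non-increasing on $[\ou_V,\infty)$ and non-decreasing on $(-\infty,\lu_V]$; these monotonicities survive passing to $\min_{\alpha\in M_U}$, so with $k(\beta):=\min_{\alpha\in M_U}F(\alpha,\beta)$ we obtain $\sup_{\beta\in\br}k(\beta)=\max_{\beta\in M_V}k(\beta)$, completing the upper-covariance identity. For the lower-covariance identity I would run the same scheme with $\pe[\cdot]$ replaced by $-\pe[-\cdot]=\inf_{P\in\ccp}E_P[\cdot]$; the $P$-integrand and its pointwise monotonicities in $(\alpha,\beta)$ are unchanged, so $\inf_{P\in\ccp}$ inherits the same monotonicities and the argument carries over with the outer $\min$ and inner $\max$ in their proper order. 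The only subtle point I foresee is the step that promotes a pointwise-in-$P$ inequality to one after taking $\sup_P$ (or $\inf_P$); this is the elementary two-line observation already embedded in the display above, and is what I expect to be the main conceptual hurdle.
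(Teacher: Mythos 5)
Your proposal is correct, and it reaches the conclusion by a route that differs from the paper's in technique though not in spirit. The paper stays at the level of the sublinear functional: for $\alpha>\ou_U$ it writes $(U-\alpha)^2=(U-\ou_U)^2+2(U-\ou_U)(\ou_U-\alpha)+(\ou_U-\alpha)^2$ and uses sub-additivity of $\pe$ together with $\pe[U-\ou_U]=0$ to compare $\pe[(U-\alpha)^2-(V-\beta)^2]$ directly with its value at the endpoint $\ou_U$ (and symmetrically at $\lu_U$, and in $\beta$ at $\ou_V,\lu_V$). You instead descend to the representation $\pe=\sup_{P\in\ccp}E_P$, apply the classical bias--variance identity under each fixed $P$ to exhibit a convex parabola in $\alpha$ with vertex $E_P[U]\in M_U$ and a concave one in $\beta$ with vertex $E_P[V]\in M_V$, and then lift the resulting pointwise monotonicity through $\sup_P$ (resp.\ $\inf_P$). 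Both arguments establish the same essential fact --- pushing $\alpha$ outside $M_U$ can only increase the objective and pushing $\beta$ outside $M_V$ can only decrease it --- and your lifting step ($F(\alpha_1,\beta)\ge E_P[g_{\alpha_1}]\ge E_P[g_{\alpha_2}]$ for every $P$, then take $\sup_P$ on the right) is sound. Your version proves slightly more than needed (full monotonicity on the two rays rather than a comparison with the endpoints), and it handles the lower-covariance identity more uniformly, since the pointwise inequalities pass through $\inf_{P\in\ccp}$ exactly as through $\sup_{P\in\ccp}$, whereas the paper's $\pe$-level estimate must be redone for $-\pe[-\,\cdot\,]$. The paper's version, by contrast, never invokes the representation by a family of measures and so works verbatim for an abstract sublinear expectation. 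One cosmetic point: you should note that the inner infimum over $M_U$ and the outer supremum over $M_V$ are attained (continuity of $F$ plus compactness of $M_U$, $M_V$), so that the $\max$/$\min$ notation in the statement is justified; the paper leaves this implicit as well.
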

\begin{proof}
      We firstly consider the cases   for $\alpha$.

      If $\alpha>\ou_{U}:=\pe[U]$, we have
        \begin{align*}
          &\pe[(U-\alpha)^2-(V-\beta)^2]\\
          =&\pe[(U-\ou_U)^2-(V-\beta)^2+2(U-\ou_{U})(\ou_{U}-\alpha)]+(\ou_{U}-\alpha)^2\\
          \ge& \pe[(U-\ou_U)^2-(V-\beta)^2]-2(\pe[U]-\ou_{U})(\alpha-\ou_{U})+(\ou_{U}-\alpha)^2\\
          \ge& \pe[(U-\ou_U)^2-(V-\beta)^2].
        \end{align*}
        In the same way , if $\alpha<\lu_{U}:=-\pe[-U]$, we have
        $$ \pe[(U-\alpha)^2-(V-\beta)^2]\ge \pe[(U-\lu_{U})^2-(V-\beta)^2] ,$$
        which means that
        $$ \min_{\alpha\in \br}\pe[(U-\alpha)^2-(V-\beta)^2]=\min_{\alpha\in M_{U}}\pe[(U-\alpha)^2-(V-\beta)^2].$$
        And for any $\alpha\in M_{U}$ we can similarly obtain that,
        \begin{itemize}
          \item [(1)] if  $\beta>\ou_{V}:=\pe[V]$, $\pe[(U-\alpha)^2-(V-\beta)^2]\le \pe[(U-\alpha)^2-(V-\ou_{V})^2]$;
          \item [(2)] if $\beta<\lu_{V}:=-\pe[-V]$, $\pe[(U-\alpha)^2-(V-\beta)^2]\le \pe[(U-\alpha)^2-(V-\lu_{V})^2]$.
        \end{itemize}
        Therefore, the first equation holds. The proof for second equation is similar.
\end{proof}

\begin{lemma}
     Under the same assumptions of Proposition \ref{pro1}, we have
     \begin{itemize}
       \item [(i)]$\overline{C}(X,Y)=\max_{\beta\in \br}\min_{\alpha\in \br}\pe[(U-\alpha)^2-(V-\beta)^2].$\\
        Furthermore, there exists $\alpha^*\in M_U$ and $\beta^*\in M_V$ such that
        \[\overline{C}(X,Y)=\pe[(U-\alpha^*)^2-(V-\beta^*)^2].\]
       \item [(ii)]$\underline{C}(X,Y)=\min_{\alpha\in \br}\max_{\beta\in \br}\lt(-\pe[-(U-\alpha)^2+(V-\beta)^2]\rt).$\\
       Furthermore, there exists $\alpha^*\in M_U$ and $\beta^*\in M_V$ such that
        \[\underline{C}(X,Y)=-\pe[-(U-\alpha^*)^2+(V-\beta^*)^2].\]
        \end{itemize}
\end{lemma}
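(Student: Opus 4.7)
The plan is to work on the compact domains $M_U$ and $M_V$ (justified by the preceding Lemma \ref{lem1}) and to establish part (i) by proving two separate inequalities, then derive part (ii) immediately from part (i) via the negative-coupling identity $\underline{C}(X,Y)=-\overline{C}(-X,Y)$ of Proposition \ref{prop_property}(6): under the substitution $X\mapsto -X$ one checks that the new $U$ and $V$ become $-V$ and $-U$ respectively, so the formula in (i) applied to $\overline{C}(-X,Y)$ rearranges (after renaming the dummy variables $\alpha,\beta$ and taking a minus sign) into the stated expression for $\underline{C}(X,Y)$.

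For the upper bound in (i), I would fix $\beta\in M_V$ and apply Sion's minimax theorem in exactly the same way as in the proof of Theorem \ref{prop_rt}: the integrand $E_P[(U-\alpha)^2-(V-\beta)^2]$ is quadratic, hence convex, in $\alpha$ on the compact convex set $M_U$ and linear in $P$ on the convex set $\ccp$, so $\min_{\alpha\in M_U}$ and $\sup_{P\in\ccp}$ may be interchanged. For each fixed $P$, the key identity \cref{eq_cov_2} evaluates the inner minimum to $V_P(U)-V_P(V)-(E_P[V]-\beta)^2\le C_P(X,Y)$, the minimum being attained at $\alpha=E_P[U]\in M_U$. Taking $\sup_P$ and then $\max_\beta$ produces $\max_\beta\min_\alpha\pe[\cdots]\le\overline{C}(X,Y)$.

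For the reverse inequality, I would rely only on the pointwise bound $\pe[\,\cdot\,]\ge E_P[\,\cdot\,]$, which immediately gives $\min_{\alpha\in M_U}\pe[\cdots]\ge\min_{\alpha\in M_U}E_P[\cdots]$ for every $P\in\ccp$. Choosing $\beta=E_P[V]\in M_V$ and invoking \cref{eq_cov_2} once more reduces the right-hand side to $C_P(X,Y)$; taking the supremum over $P\in\ccp$ then yields $\max_\beta\min_\alpha\pe[\cdots]\ge\overline{C}(X,Y)$.

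For the ``furthermore'' assertion, I would use the decomposition
\[
\pe[(U-\alpha)^2-(V-\beta)^2]=\pe[U^2-2\alpha U-V^2+2\beta V]+\alpha^2-\beta^2,
\]
obtained by pulling the deterministic constants $\alpha^2$ and $\beta^2$ out of $\pe$. The first summand, being a supremum of affine functions of $(\alpha,\beta)$, is jointly convex and continuous, so the whole expression is jointly continuous in $(\alpha,\beta)$ and, thanks to the $+\alpha^2$ term, strictly convex in $\alpha$. The inner minimum over the compact set $M_U$ is therefore uniquely attained for each $\beta$, and Berge's maximum theorem makes $\beta\mapsto\min_{\alpha\in M_U}\pe[\cdots]$ continuous on the compact set $M_V$, producing a maximizer $\beta^*$; setting $\alpha^*$ to be the corresponding minimizer gives the required pair. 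The main obstacle is that this objective is \emph{not} concave in $\beta$ (the $-\beta^2$ term destroys concavity), so Sion's theorem cannot be applied to the outer $\max$--$\min$ in one stroke; this is precisely why the argument must go through the two-sided inequality above rather than a single minimax swap.
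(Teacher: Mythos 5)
Your proof is correct and follows essentially the same route as the paper's: both rest on the envelope theorem (Theorem \ref{prop_rt}), the identity $C_P(X,Y)=V_P(U)-V_P(V)$ together with \cref{eq_cov_2}, Lemma \ref{lem1} to pass between $\br$ and the compact domains, and Sion's minimax theorem applied to the inner pair $\left(\min_{\alpha\in M_U},\ \sup_{P\in\ccp}\right)$ --- your two-sided-inequality presentation simply unpacks the paper's chain of equalities into its easy and hard halves. The only genuine additions are deriving (ii) from (i) via the negative-coupling identity $\underline{C}(X,Y)=-\overline{C}(-X,Y)$ (where the paper just says ``similarly'') and your more explicit continuity-plus-compactness justification for the existence of $(\alpha^*,\beta^*)$, which the paper disposes of in one line.
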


\begin{proof}
  Since the functional $E_P[(U-\alpha)^2-(V-\beta)^2]$ is a convex function for $\alpha$ and a linear function for $P$, $M_{U}$ is closed and convex, the minimax theorem can be used. By Theorem \ref{prop_rt}, Lemma \ref{lem1} and \cref{eq_cov_2}, we have
  \begin{align*}
    \overline{C}(X,Y)&=\sup_{P\in \ccp}C_P(X,Y)\\
    &=\sup_{P\in \ccp}\max_{\beta\in \br}\min_{\alpha\in \br}E_P[(U-\alpha)^2-(V-\beta)^2]\\
    &=\sup_{P\in \ccp}\max_{\beta\in M_{V}}\min_{\alpha\in M_{U}}E_P[(U-\alpha)^2-(V-\beta)^2]\\
    &=\max_{\beta\in M_{V}}\sup_{P\in \ccp}\min_{\alpha\in M_{U}}E_P[(U-\alpha)^2-(V-\beta)^2]\\
    &=\max_{\beta\in M_{V}}\min_{\alpha\in M_{U}}\sup_{P\in \ccp}E_P[(U-\alpha)^2-(V-\beta)^2]\\
    &=\max_{\beta\in \br}\min_{\alpha\in \br}\pe[(U-\alpha)^2-(V-\beta)^2].
  \end{align*}
  The existence of $(\alpha^*,\beta^*)$ is due to the continuity of $\pe[(U-\alpha)^2-(V-\beta)^2]$ in $(\alpha,\beta)$.

  The lower covariance case can be proved similarly.
\end{proof}

\begin{remark}
In Example \ref{emem3}, let $U=\frac{X+Y}{2}$ and $V=\frac{X-Y}{2}$, define function $h$ as
 \begin{equation*}
  h(\alpha,\beta)=\pe[(U-\alpha)^2-(V-\beta)^2]=
  \begin{cases}
    1+(\alpha+\frac{1}{2})^2-(\beta+\frac{1}{2})^2,&\alpha-\beta \ge 0\\
    1+(\alpha-\frac{1}{2})^2-(\beta-\frac{1}{2})^2,&\alpha-\beta<0
  \end{cases}.
\end{equation*}
Then we have
 $$ \overline{C}(X,Y)=\max_{\beta\in \br}\min_{\alpha\in \br} h(\alpha,\beta)=\max_{\beta\in M_V}\min_{\alpha\in M_U} h(\alpha,\beta)=1.$$
On the other hand,
$$ \min_{\alpha\in M_U}\max_{\beta\in M_V} h(\alpha,\beta)=\frac{7}{4}<+\infty=\min_{\alpha\in \br}\max_{\beta\in \br} h(\alpha,\beta).$$
which means the operators $\min\max$ and $\max\min$ in Proposition \ref{pro1} can not be interchanged.
\end{remark}

In the end of this section, we list following inequalities for upper and lower covariances, which generalize the corresponding inequalities for upper and lower variances in Appendix G of Walley \cite{walley1991statistical}.
\begin{proposition}\label{p1}
  Let $X$ and $Y$ be two random variables on sublinear expectation space $(\Omega,\mathcal{F},\pe)$ with $\pe[X^2]+\pe[Y^2]<\infty$. Let
  \begin{gather*}
    \rho_X=\frac{1}{2}(\overline{\mu}_X+\underline{\mu}_X),\  \rho_Y=\frac{1}{2}(\overline{\mu}_Y+\underline{\mu}_Y),\\
    \Delta_X=\overline{\mu}_X-\underline{\mu}_X,\  \Delta_Y=\overline{\mu}_Y-\underline{\mu}_Y.
  \end{gather*}
We have the following results:
\begin{itemize}
  \item [(1)] When $\mu_1=\overline{\mu}_X,\mu_2=\overline{\mu}_Y$ or $\mu_1=\underline{\mu}_X,\mu_2=\underline{\mu}_Y$, we have
  \begin{gather*}
    \overline{C}(X,Y)\le \pe[(X-\mu_1)(Y-\mu_2)]\le \overline{C}(X,Y)+\Delta_X\Delta_Y,\\
    \underline{C}(X,Y)\le -\pe[-(X-\mu_1)(Y-\mu_2)]\le \underline{C}(X,Y)+\Delta_X\Delta_Y.
  \end{gather*}
  When $\mu_1=\underline{\mu}_X,\mu_2=\overline{\mu}_Y$ or $\mu_1=\overline{\mu}_X,\mu_2=\underline{\mu}_Y$, we have
  \begin{gather*}
    \overline{C}(X,Y)-\Delta_X\Delta_Y\le \pe[(X-\mu_1)(Y-\mu_2)]\le \overline{C}(X,Y)+\Delta_X\Delta_Y,\\
    \underline{C}(X,Y)-\Delta_X\Delta_Y\le -\pe[-(X-\mu_1)(Y-\mu_2)]\le \underline{C}(X,Y)+\Delta_X\Delta_Y.
  \end{gather*}
  \item [(2)] Let
  \begin{gather*}
    \overline{M}(X,Y)=\max\{\underline{\mu}_X\underline{\mu}_Y,\overline{\mu}_X\underline{\mu}_Y,\underline{\mu}_X\overline{\mu}_Y,\overline{\mu}_X\overline{\mu}_Y\},\\
    \underline{M}(X,Y)=\min\{\underline{\mu}_X\underline{\mu}_Y,\overline{\mu}_X\underline{\mu}_Y,\underline{\mu}_X\overline{\mu}_Y,\overline{\mu}_X\overline{\mu}_Y\}.
  \end{gather*}
Then we have
    \begin{gather*}
    \overline{C}(X,Y)+\underline{M}(X,Y)\le\pe[XY]\le \overline{C}(X,Y)+\overline{M}(X,Y),\\
    \underline{C}(X,Y)+\underline{M}(X,Y)\le-\pe[-XY]\le \underline{C}(X,Y)+\overline{M}(X,Y).
  \end{gather*}
  \item [(3)] The bound of upper and lower covariances:
  \begin{gather*}
    \pe[(X-\rho_X)(Y-\rho_Y)]-\frac 14 \Delta_X\Delta_Y\le\overline{C}(X,Y)\le\pe[(X-\rho_X)(Y-\rho_Y)]+\frac 14 \Delta_X\Delta_Y,\\
    -\pe[-(X-\rho_X)(Y-\rho_Y)]-\frac 14 \Delta_X\Delta_Y\le\underline{C}(X,Y)\le-\pe[-(X-\rho_X)(Y-\rho_Y)]+\frac 14 \Delta_X\Delta_Y.
  \end{gather*}
  \item [(4)] The bound of the gap between upper and lower covariance:
  \[ 0\le\overline{C}(X,Y)-\underline{C}(X,Y)\le\pe[(X-\rho_X)(Y-\rho_Y)]+\pe[-(X-\rho_X)(Y-\rho_Y)]+\frac 12 \Delta_X\Delta_Y.
  \]
\end{itemize}
\end{proposition}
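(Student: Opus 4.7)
The plan is to reduce all four items to a single master identity and then bound an elementary cross term. A direct expansion gives
\[
E_P[(X-\mu_1)(Y-\mu_2)] = C_P(X,Y) + (E_P[X]-\mu_1)(E_P[Y]-\mu_2).
\]
Taking $\sup$ (resp.\ $\inf$) over $P\in\ccp$ and invoking the envelope theorem (Theorem \ref{prop_rt}), which identifies $\overline{C}(X,Y)=\sup_{P\in\ccp}C_P(X,Y)$ and $\underline{C}(X,Y)=\inf_{P\in\ccp}C_P(X,Y)$, we obtain
\[
\pe[(X-\mu_1)(Y-\mu_2)] = \sup_{P\in\ccp}\bigl\{C_P(X,Y) + R_P(\mu_1,\mu_2)\bigr\},
\]
where $R_P(\mu_1,\mu_2):=(E_P[X]-\mu_1)(E_P[Y]-\mu_2)$, together with the symmetric $\inf$-representation for $-\pe[-(X-\mu_1)(Y-\mu_2)]$. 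Every inequality in (1)--(3) then follows by combining the elementary estimates $\sup(f+g)\le \sup f+\sup g$ and $\sup(f+g)\ge \sup f+\inf g$ (and their $\inf$-counterparts) once $R_P$ has been bounded uniformly in $P\in\ccp$ for the prescribed $(\mu_1,\mu_2)$.

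For (1), the two ``aligned'' corner choices $(\mu_1,\mu_2)\in\{(\overline{\mu}_X,\overline{\mu}_Y),(\underline{\mu}_X,\underline{\mu}_Y)\}$ force the two factors of $R_P$ to share a common sign for every $P\in\ccp$, placing $R_P\in[0,\Delta_X\Delta_Y]$ and hence yielding the one-sided sandwich. The two mixed corners force opposite signs, so $R_P\in[-\Delta_X\Delta_Y,0]$, which produces the symmetric bound. For (2) we set $\mu_1=\mu_2=0$, so that $R_P=E_P[X]E_P[Y]$; since $(E_P[X],E_P[Y])\in M_X\times M_Y$, this product lies in $[\underline{M}(X,Y),\overline{M}(X,Y)]$, and the conclusion is immediate.

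For (3), the choice $\mu_1=\rho_X,\ \mu_2=\rho_Y$ gives $|E_P[X]-\rho_X|\le\Delta_X/2$ and $|E_P[Y]-\rho_Y|\le\Delta_Y/2$, so that $|R_P|\le\Delta_X\Delta_Y/4$; rearranging the resulting sandwiches on $\pe[(X-\rho_X)(Y-\rho_Y)]$ and $-\pe[-(X-\rho_X)(Y-\rho_Y)]$ yields the two-sided estimates on $\overline{C}$ and $\underline{C}$. Part (4) is then a direct consequence: $\overline{C}(X,Y)\ge\underline{C}(X,Y)$ is immediate from the envelope theorem, while the upper bound is obtained by subtracting the lower estimate for $\underline{C}$ from the upper estimate for $\overline{C}$ provided by (3).

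We expect no substantive obstacle beyond careful bookkeeping. The only two points needing attention are (a) tracking the sign of $R_P$ across the four corner cases of (1), and (b) when working with the lower covariance, remembering that $-\pe[-\,\cdot\,]$ converts the outer supremum into an infimum, so the roles of the two auxiliary $\sup$/$\inf$ inequalities must be swapped accordingly.
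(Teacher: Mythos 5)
Your proposal is correct and follows essentially the same route as the paper: the same decomposition $E_P[(X-\mu_1)(Y-\mu_2)]=C_P(X,Y)+(E_P[X]-\mu_1)(E_P[Y]-\mu_2)$, the envelope theorem, and uniform bounds on the cross term at the corner points, at $(0,0)$, and at $(\rho_X,\rho_Y)$. Your observation that the mixed corners give $R_P\in[-\Delta_X\Delta_Y,0]$ in fact yields a slightly sharper upper bound than the one stated, but this only strengthens the conclusion.
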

\begin{proof}
  (1) For any real numbers $\mu_1,\mu_2\in\mathbb{R}$ and $P\in\mathcal{P}$, we have
  \[ E_P[(X-\mu_1)(Y-\mu_2)]=C_P(X,Y)+(E_P[X]-\mu_1)(E_P[Y]-\mu_2).
    \]
   By Theorem \ref{prop_rt}, we have
   \begin{equation}
   \begin{aligned}\label{eq_1}
    &\pe[(X-\mu_1)(Y-\mu_2)]\\
    =&\sup_{P\in\mathcal{P}}E_P[(X-\mu_1)(Y-\mu_2)]\\
    =&\sup_{P\in\mathcal{P}}(C_P(X,Y)+(E_P[X]-\mu_1)(E_P[Y]-\mu_2))\\
    \le&\overline{C}(X,Y)+\sup_{P\in\mathcal{P}}\left((E_P[X]-\mu_1)(E_P[Y]-\mu_2)\right),\\
   \end{aligned}
  \end{equation}
and
\begin{equation}
  \begin{aligned}\label{eq_2}
    &\overline{C}(X,Y)\\
    =&\sup_{P\in\ccp}C_P(X,Y)\\
    =&\sup_{P\in\ccp}(E_P[(X-\mu_1)(Y-\mu_2)]-(E_P[X]-\mu_1)(E_P[Y]-\mu_2))\\
    \le&\pe[(X-\mu_1)(Y-\mu_2)]-\inf_{P\in\ccp}\left((E_P[X]-\mu_1)(E_P[Y]-\mu_2)\right).\\
\end{aligned}
\end{equation}

If $\mu_1=\overline{\mu}_X,\mu_2=\overline{\mu}_Y$ or $\mu_1=\underline{\mu}_X,\mu_2=\underline{\mu}_Y$, then we have
$$0\leq \inf_{P\in\ccp}(E_P[X]-\mu_1)(E_P[Y-\mu_2])\leq \sup_{P\in\cp}(E_P[X]-\mu_1)(E_P[Y-\mu_2])\leq \Delta_X\Delta_Y.$$

The first inequality in (1) holds. The other inequalities in (1) can be proved similarly.

(2) By  Theorem \ref{prop_rt} and the fact that
\[ E_P[XY]=C_P(X,Y)+E_P[X]E_P[Y],
  \]
  we can get the result in the similar way as in (1).

  (3) Let $\mu_1=\rho_X$ and $\mu_2=\rho_Y$, then
$$-\frac{1}{4}\Delta_X\Delta_Y\leq(E_P[X]-\rho_X)(E_P[Y]-\rho_Y)\leq\frac{1}{4}\Delta_X\Delta_Y, \ \ \forall P\in\ccp.$$
  According to \cref{eq_1} and \cref{eq_2},we get
  \begin{gather*}
        \pe[(X-\rho_X)(Y-\rho_Y)]-\frac 14 \Delta_X\Delta_Y\le\overline{C}(X,Y)\le\pe[(X-\rho_X)(Y-\rho_Y)]+\frac 14 \Delta_X\Delta_Y.
  \end{gather*}
Similarly, we can obtain the case of lower covariance.

  (4) It is obvious according to (3).
\end{proof}

By (3) of Proposition \ref{p1}, we immediately have the following corollary.
\begin{corollary}\label{c316}
Let $X$ and $Y$ be two random variables on the sublinear expectation space $(\Omega,\mathcal{F},\pe)$ with $\pe[X^2]+\pe[Y^2]<\infty$. Furthermore, if $\ou_X=\lu_X=\mu_X$, then
\begin{align*}
\overline{C}(X,Y)&=\pe[(X-\mu_X)(Y-\rho_Y)],\\
\underline{C}(X,Y)&=-\pe[-(X-\mu_X)(Y-\rho_Y)].
\end{align*}
\end{corollary}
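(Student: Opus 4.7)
The plan is to invoke part (3) of Proposition \ref{p1} under the degeneracy hypothesis $\overline{\mu}_X = \underline{\mu}_X = \mu_X$. First I would observe that this hypothesis forces $\Delta_X = \overline{\mu}_X - \underline{\mu}_X = 0$ and $\rho_X = \frac{1}{2}(\overline{\mu}_X + \underline{\mu}_X) = \mu_X$, so that the slack term $\tfrac{1}{4}\Delta_X\Delta_Y$ appearing in the two-sided estimates vanishes and the expression $\pe[(X-\rho_X)(Y-\rho_Y)]$ becomes $\pe[(X-\mu_X)(Y-\rho_Y)]$.

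Substituting these values into the upper-covariance bound
\[\pe[(X-\rho_X)(Y-\rho_Y)] - \tfrac{1}{4}\Delta_X\Delta_Y \le \overline{C}(X,Y) \le \pe[(X-\rho_X)(Y-\rho_Y)] + \tfrac{1}{4}\Delta_X\Delta_Y\]
collapses both sides to $\pe[(X-\mu_X)(Y-\rho_Y)]$, forcing equality. Applying the identical substitution to the companion bound for $\underline{C}(X,Y)$ given in part (3) of Proposition \ref{p1} yields the second identity.

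The argument presents no serious obstacle, as the corollary is genuinely immediate once one observes that the slack term in Proposition \ref{p1} is proportional to $\Delta_X\Delta_Y$, so annihilating $\Delta_X$ alone is enough to turn both inequalities into equalities. The only point I would take care to articulate is that the finiteness hypothesis $\pe[X^2]+\pe[Y^2]<\infty$ is inherited directly from the ambient assumption of the corollary, so Proposition \ref{p1} is applicable without extra justification.
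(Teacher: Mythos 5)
Your proposal is correct and is exactly the paper's argument: the authors state that the corollary follows immediately from part (3) of Proposition \ref{p1}, and your observation that $\Delta_X=0$ kills the slack term $\tfrac14\Delta_X\Delta_Y$ while $\rho_X=\mu_X$ identifies the middle expression is precisely the intended deduction.
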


\begin{remark}
If both of $X$ and $Y$ with zero means have variance-certainty, i.e.,
$$\pe[X]=\pe[-X]=0,\ \ \ \pe[Y]=\pe[-Y]=0,$$
$$\pe[X^2]=-\pe[-X^2]=:V(X), \ \ \ \pe[Y^2]=-\pe[-Y^2]=:V(Y),$$
then upper and lower correlation coefficients $\overline{\rho}_{XY}$ and $\underline{\rho}_{XY}$ can be calculated as
$$\overline{\rho}_{XY}=\frac{\overline{C}(X,Y)}{\sqrt{V(X)V(Y)}}=\frac{\pe[XY]}{\sqrt{V(X)V(Y)}}, $$
$$\underline{\rho}_{XY}=\frac{\underline{C}(X,Y)}{\sqrt{V(X)V(Y)}}=\frac{-\pe[-XY]}{\sqrt{V(X)V(Y)}}.$$
The related portfolio optimization problem with such ambiguous correlation was solved in Fouque et al. \cite{FPW}.
\end{remark}

\section{Calculation of upper and lower covariance}
While calculating lower variance is often straightforward due to the optimal probability measure typically being an extreme point of $\cp$ (see (2) in Theorem \ref{vpt}), computing upper variance and upper covariance proves more challenging. This is because the optimal probability measure for these cases often doesn't reside at an extreme point (Theorem \ref{vpt}, Example \ref{em1}, Example \ref{emem3}). However, Proposition \ref{prop_property}  establishes that $\underline{C}(X,Y) = -\overline{C}(-X,Y)$, enabling us to focus solely on upper covariance calculations.

In this section, we consider the case of $\cp$ being generalized by the finitely many probability measures, i.e., $\cp=\{P_1,\cdots, P_K\}$.

In this case, let $X$ and $Y$ be two random variables on the sublinear expectation space $(\Omega,\mathcal{F},\pe)$ with $\pe[X^2]+\pe[Y^2]<\infty$. We calculate upper covariance by Proposition \ref{pro1} based on a simple algorithm of calculation for the upper variance introduced by Li et al. \cite{li2023}.
In this case, the upper and lower covariance is defined by
    \begin{gather}\label{eee1}
    \overline{C}(X,Y)=\max_{\beta\in\br}\min_{\alpha\in\br}\max_{1\leq i\leq K}\{E_{P_i}[(U-\alpha)^2-(V-\beta)^2]\},
\end{gather}
where $U=\frac{X+Y}{2}$ and $V=\frac{X-Y}{2}$.

We  rewrite \cref{eee1} as
\begin{equation}\label{eq_ucov_rewrite}
\overline{C}(X,Y)=\max_{\beta\in\br}\min_{\alpha\in\br}\max_{1\leq i\leq K}\{\alpha^2-2\mu_i\alpha-\beta^2+2\nu_i\beta+\kappa_i\},
\end{equation}
where $\mu_i=\frac{E_{P_i}[X]+E_{P_i}[Y]}{2}$, $\nu_i=\frac{E_{P_i}[X]-E_{P_i}[Y]}{2}$ and $\kappa_i=E_{P_i}[XY]$.

The following lemma is proved in Li et al. \cite{li2023}.
\begin{lemma}\label{lem_minmax}
Let
$$V=\min_{\alpha\in\br}\max_{1\leq i\leq K}\{\alpha^2-2\mu_i\alpha+d_i\},$$
then
$$V=\max\left\{\max_{1\leq i\leq K}(d_i-\mu_i^2),\max_{1\leq i<j\leq K}h_{ij}(\mu_{ij})\right\},$$
where $h_{ij}(x)=x^2-2\mu_ix+d_i$, \begin{align*}
 \mu_{ij}=
 \begin{cases}
    \lt(\underline{\mu}_{ij}\vee\frac{d_{j}-d_{i}}{2(\mu_{j}-\mu_{i})}\rt)\wedge\overline{\mu}_{ij}, & \mu_i\neq\mu_j, \\
    \mu_i, & \mu_i=\mu_j,
 \end{cases}  \ \ \ \ \ 1\leq i<j\leq K,
\end{align*}
and $\overline{\mu}_{ij}=\mu_i\vee\mu_j, \ \underline{\mu}_{ij}=\mu_i\wedge\mu_j$.
\end{lemma}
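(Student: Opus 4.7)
The plan is to prove the two matching inequalities $V\ge\mathrm{RHS}$ and $V\le\mathrm{RHS}$ separately, exploiting the one-variable convex-quadratic structure. Write $f_i(\alpha):=\alpha^2-2\mu_i\alpha+d_i=(\alpha-\mu_i)^2+(d_i-\mu_i^2)$, so each $f_i$ is a convex parabola with vertex at $\mu_i$ and minimum value $d_i-\mu_i^2$; the upper envelope $F(\alpha):=\max_{1\le i\le K}f_i(\alpha)$ is then convex and coercive (same leading coefficient), and its minimum $V$ is attained on $\br$ at some $\alpha^\star$.

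First I would prove the lower bound $V\ge\max\bigl\{\max_i(d_i-\mu_i^2),\max_{i<j}h_{ij}(\mu_{ij})\bigr\}$. The single-index terms follow immediately from $F\ge f_i$ pointwise, so $V\ge\min_\alpha f_i(\alpha)=d_i-\mu_i^2$. For each pair term, since $F\ge\max(f_i,f_j)$, one has $V\ge\min_\alpha\max(f_i(\alpha),f_j(\alpha))$, and the task reduces to showing this two-function min-max dominates $h_{ij}(\mu_{ij})$. When $\mu_i=\mu_j$, $f_i-f_j$ is constant and the claim is straightforward. When $\mu_i\ne\mu_j$, the affine function $f_i-f_j$ has its unique zero at $\alpha_0:=(d_j-d_i)/(2(\mu_j-\mu_i))$, and a direct geometric analysis depending on the position of $\alpha_0$ relative to $[\underline{\mu}_{ij},\overline{\mu}_{ij}]$ shows that $\min_\alpha\max(f_i,f_j)$ equals $h_{ij}(\alpha_0)$ in the interior case (where both parabolas coincide at that height) and equals $d_i-\mu_i^2$ or $d_j-\mu_j^2$ in the two boundary cases, which in turn dominates the clipped quantity $h_{ij}(\mu_{ij})$ with $\mu_{ij}=(\underline{\mu}_{ij}\vee\alpha_0)\wedge\overline{\mu}_{ij}$.

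Next, for the upper bound $V\le\mathrm{RHS}$, I would analyze the minimizer $\alpha^\star$ of $F$ through convex subdifferential calculus. Let $I^\star:=\{i:f_i(\alpha^\star)=F(\alpha^\star)\}$ denote the active set; then $0\in\partial F(\alpha^\star)=\mathrm{conv}\{f'_i(\alpha^\star):i\in I^\star\}$. If $|I^\star|=1$, then $f'_i(\alpha^\star)=0$ forces $\alpha^\star=\mu_i$ and $V=f_i(\mu_i)=d_i-\mu_i^2$, which is captured by the first outer maximum. Otherwise, $0$ lying in the convex hull of the active derivatives yields two active indices $i,j$ (WLOG $\mu_i\le\mu_j$) with $\mu_i\le\alpha^\star\le\mu_j$; since $f_i(\alpha^\star)=f_j(\alpha^\star)=V$, $\alpha^\star$ is exactly the crossover $\alpha_0$, which thus lies in $[\underline{\mu}_{ij},\overline{\mu}_{ij}]$, so $\mu_{ij}=\alpha^\star$ and $V=h_{ij}(\mu_{ij})$. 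In either case $V$ is realized by one of the $K+\binom{K}{2}$ candidates on the right.

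The hard part will be the pair-level case analysis in the lower bound: one must verify that the clipping operation defining $\mu_{ij}$ faithfully reproduces $\min_\alpha\max(f_i,f_j)$ in the interior configuration and is dominated by the corresponding single-vertex term $d_i-\mu_i^2$ or $d_j-\mu_j^2$ in the two boundary configurations, so that the global minimum $V$ is neither missed by nor overshot by the finite outer maximum. Once this positional geometry is cleanly handled, both directions combine to give the claimed formula.
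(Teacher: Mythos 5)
Your proposal is correct. Note that the paper itself gives no proof of this lemma --- it is quoted verbatim from Li, Yu and Zheng \cite{li2023} --- so there is no internal argument to compare against; judged on its own, your two-sided argument is a valid, self-contained proof. The lower bound is right: $V\ge d_i-\mu_i^2$ is immediate, and for pairs the three-case positional analysis of $\alpha_0=(d_j-d_i)/(2(\mu_j-\mu_i))$ relative to $[\underline{\mu}_{ij},\overline{\mu}_{ij}]$ does check out --- in the interior case $\min_\alpha\max(f_i,f_j)=h_{ij}(\alpha_0)=h_{ij}(\mu_{ij})$ exactly, and in the boundary cases the clipped value $h_{ij}(\mu_{ij})$ equals $f_i$ evaluated at the nearer vertex, which is dominated by $\max(f_i,f_j)$ there and hence by $\min_\alpha\max(f_i,f_j)$ (one boundary case gives equality with $d_i-\mu_i^2$, the other gives $h_{ij}(\mu_j)\le d_j-\mu_j^2$). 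The upper bound via $0\in\partial F(\alpha^\star)=\mathrm{conv}\{2(\alpha^\star-\mu_k):k\ \text{active}\}$ is also sound: either some active vertex coincides with $\alpha^\star$ (single-index candidate) or there are active $i,j$ with $\mu_i<\alpha^\star<\mu_j$, forcing $\alpha^\star=\alpha_0\in[\underline{\mu}_{ij},\overline{\mu}_{ij}]$ and $V=h_{ij}(\mu_{ij})$. The only loose ends are cosmetic: the dichotomy should be ``some active derivative vanishes'' versus ``two active indices straddle $\alpha^\star$ strictly'' rather than $|I^\star|=1$ versus not (with $|I^\star|\ge 2$ one active vertex may still equal $\alpha^\star$, and if the two straddling indices satisfy $\mu_i=\mu_j$ the ``crossover'' is undefined but the single-index term covers it), and you should confirm your WLOG ordering is consistent with the asymmetric definition $h_{ij}=f_i$ (harmless, since at $\alpha_0$ one has $f_i=f_j$, and at a clipped endpoint the needed inequality holds either way).
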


Similar to Proposition \ref{pp25}, the upper covariance under $K$-probabilities can be represented by the maximum of upper covariances under $2$-probabilities.

\begin{proposition}\label{lem_ucov}
  The upper covariance can be calculated by
  $$ \overline{C}(X,Y)=\max_{1\le i<j\le K} \overline{C}_{ij}(X,Y),$$
  where $\overline{C}_{ij}(X,Y)$ is upper covariance of $X$ and $Y$ under two probability measures $P_i$ and $P_j$.
\end{proposition}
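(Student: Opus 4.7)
The plan is to apply Lemma~\ref{lem_minmax} to the inner $\min_\alpha\max_i$ of the representation in \cref{eq_ucov_rewrite}, and then to compare what the lemma produces for the full collection $\{P_1,\dots,P_K\}$ with what it produces for each two-element subcollection $\{P_i,P_j\}$. This parallels the variance-case argument in Proposition~\ref{pp25}.

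Concretely, I would start from \cref{eq_ucov_rewrite},
$$\overline{C}(X,Y)=\max_{\beta\in\br}\min_{\alpha\in\br}\max_{1\le i\le K}\{\alpha^2-2\mu_i\alpha+d_i(\beta)\},\qquad d_i(\beta):=-\beta^2+2\nu_i\beta+\kappa_i,$$
and apply Lemma~\ref{lem_minmax} at each fixed $\beta$ to obtain
$$\min_{\alpha}\max_i\{\alpha^2-2\mu_i\alpha+d_i(\beta)\}=\max\left\{\max_{1\le i\le K}(d_i(\beta)-\mu_i^2),\;\max_{1\le i<j\le K}h_{ij}^{\beta}(\mu_{ij}^{\beta})\right\},$$
where $h_{ij}^{\beta}$ and the one-dimensional minimizer $\mu_{ij}^{\beta}$ depend only on the quadruple $(\mu_i,\mu_j,d_i(\beta),d_j(\beta))$. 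Pushing $\max_\beta$ through the finite maxima and computing the one-variable maximum $\max_\beta(d_i(\beta)-\mu_i^2)=\kappa_i+\nu_i^2-\mu_i^2=C_{P_i}(X,Y)$ then yields
$$\overline{C}(X,Y)=\max\bigl\{\max_{1\le i\le K}C_{P_i}(X,Y),\;\max_{1\le i<j\le K}\max_\beta h_{ij}^{\beta}(\mu_{ij}^{\beta})\bigr\}.$$

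To close the identification, I would run exactly the same reduction on the restricted two-probability collection $\{P_i,P_j\}$, where Lemma~\ref{lem_minmax} produces exactly three terms, giving
$$\overline{C}_{ij}(X,Y)=\max\bigl\{C_{P_i}(X,Y),\,C_{P_j}(X,Y),\,\max_\beta h_{ij}^{\beta}(\mu_{ij}^{\beta})\bigr\}.$$
Combining the two displays (and using that for $K\ge 2$ every index $i$ belongs to some pair) collapses the right-hand side to $\max_{1\le i<j\le K}\overline{C}_{ij}(X,Y)$, yielding the claim. The main obstacle is verifying that $h_{ij}^{\beta}$ and $\mu_{ij}^{\beta}$ genuinely depend only on the pair data $(\mu_i,\mu_j,d_i(\beta),d_j(\beta))$ and not on the other probabilities, so that the pair term in the $K$-probability decomposition coincides, term-by-term, with the cross term appearing in $\overline{C}_{ij}$; this is immediate from the explicit formulas in Lemma~\ref{lem_minmax}, but it is the conceptual crux that makes the reduction work.
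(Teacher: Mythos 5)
Your proposal is correct and follows essentially the same route as the paper's proof: both apply Lemma~\ref{lem_minmax} to the inner $\min_\alpha\max_i$ at each fixed $\beta$, interchange the resulting finite maxima with $\max_\beta$, evaluate $\max_\beta(d_i(\beta)-\mu_i^2)=C_{P_i}(X,Y)$, and identify the pairwise cross terms with $\overline{C}_{ij}(X,Y)$ by running the same reduction on $\{P_i,P_j\}$. The point you flag as the crux --- that the cross term depends only on the pair data $(\mu_i,\mu_j,d_i(\beta),d_j(\beta))$ --- is exactly what the paper uses implicitly when it writes the two-probability decomposition.
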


%
\begin{proof}
  We consider the upper covariance $\overline{C}(X,Y)$ as defined in \cref{eq_ucov_rewrite}. Without loss of generality, we assume $\mu_1\leq\mu_2\leq\cdots\leq\mu_K$.
  \par For $1\le i<j\le K$, we define $\alpha_{ij}(\beta)$ as
  \begin{equation*}
      \alpha_{ij}(\beta)=
      \begin{cases}
          (\mu_i \vee (d_{ij}\beta+e_{ij}))\wedge \mu_j,& \mu_i<\mu_j,\\
          \mu_i, &\mu_i=\mu_j,
      \end{cases}
  \end{equation*}
  where $d_{ij}=\tfrac{\nu_j-\nu_i}{\mu_j-\mu_i}$ and $e_{ij}=\tfrac{\kappa_j-\kappa_i}{2(\mu_j-\mu_i)}$.
  \par Fix $\beta\in \br$, according to Lemma \ref{lem_minmax}, we have
  $$\min_{\alpha \in \br}\max_{1\le i\le K}\{ \alpha^2-2\mu_i\alpha-\beta^2+2\nu_i\beta+\kappa_i \}=\max\lt\{\max_{1\le i<j\le K}h_{ij}^{\beta}(\alpha_{ij}(\beta)),\max_{1\le i\le K}\lt\{ -\beta^2+2\nu_i\beta+\kappa_i-\mu_i^2 \rt\}\rt\},$$
  where
  \begin{equation*}
      h_{ij}^{\beta}(x)=x^2-2\mu_ix-\beta^2+2\nu_i\beta+\kappa_i.
  \end{equation*}
  Therefore,
  \begin{align*}
      \overline{C}(X,Y)=& \max_{\beta\in \br}\max\lt\{\max_{1\le i<j\le K}h_{ij}^{\beta}(\alpha_{ij}(\beta)),\max_{1\le i\le K}\lt\{ -\beta^2+2\nu_i\beta+\kappa_i-\mu_i^2\rt\}\rt\}\\
      =& \max\lt\{\max_{\beta\in \br}\max_{1\le i<j\le K}h_{ij}^{\beta}(\alpha_{ij}(\beta)),\max_{\beta\in \br}\max_{1\le i\le K}\lt\{ -\beta^2+2\nu_i\beta+\kappa_i-\mu_i^2 \rt\}\rt\}\\
      =& \max\lt\{\max_{1\le i<j\le K}\max_{\beta\in \br}h_{ij}^{\beta}(\alpha_{ij}(\beta)),\max_{1\le i\le K}C_{P_i}(X,Y)\rt\}\\
      =&\max_{1\le i<j\le K}\max\lt\{ \max_{\beta\in \br}h_{ij}^{\beta}(\alpha_{ij}(\beta)), C_{P_i}(X,Y), C_{P_j}(X,Y) \rt\}.
  \end{align*}
  If we consider upper covariance under $\{P_i, P_j\}$, then we can easily get
  $$ \overline{C}_{ij}(X,Y)=\max\lt\{ \max_{\beta\in \br}h_{ij}^{\beta}(\alpha_{ij}(\beta)), C_{P_i}(X,Y), C_{P_j}(X,Y) \rt\}. $$
  Therefore, we obtain
  $$\overline{C}(X,Y)=\max_{1\le i<j\le K} \overline{C}_{ij}(X,Y).$$
\end{proof}
\begin{corollary}
The upper variance of $X$ under $\cp=\{P_1,\cdots,P_k\}$ can be calculated by
$$\overline{V}(X)=\max_{1\leq i<j\leq K}\overline{V}_{ij}(X),$$
where $\overline{V}_{ij}(X)$ is the upper variance of $X$ under two probability measures $P_i$ and $P_j$.
\end{corollary}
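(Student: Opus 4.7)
The plan is to obtain this corollary as an immediate specialization of Proposition \ref{lem_ucov} to the diagonal case $Y = X$. The key bridge is the last sentence of Proposition \ref{pro1}, which asserts that $\overline{C}(X,X) = \overline{V}(X)$. This identity lets us reinterpret any statement about upper covariance as a statement about upper variance whenever the two arguments coincide, and it is what allows the previously established covariance-decomposition result to transfer verbatim to the variance setting.

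First I would apply the identity $\overline{C}(X,X) = \overline{V}(X)$ at the level of the full family $\cp = \{P_1,\dots,P_K\}$. Next, I would apply the same identity inside each restricted sublinear expectation space generated by only two probabilities $\{P_i, P_j\}$, noting that nothing in Proposition \ref{pro1} depends on the cardinality of $\cp$, so we also have $\overline{C}_{ij}(X,X) = \overline{V}_{ij}(X)$ for every pair $1 \le i < j \le K$. With these two identifications in hand, invoking Proposition \ref{lem_ucov} with $Y = X$ yields the chain
$$\overline{V}(X) = \overline{C}(X,X) = \max_{1 \le i < j \le K} \overline{C}_{ij}(X,X) = \max_{1 \le i < j \le K} \overline{V}_{ij}(X),$$
which is exactly the claimed formula.

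There is essentially no obstacle to speak of: all the genuine work already resides in the proof of Proposition \ref{lem_ucov}, which reduced the $K$-probability maxmin-max problem to pairwise comparisons via Lemma \ref{lem_minmax}. The only point to double-check is that the diagonal reduction $\overline{C}(X,X) = \overline{V}(X)$ does not implicitly use some extra structure (for instance a nontrivial mean-uncertainty in $Y$ distinct from that of $X$); inspecting the derivation in Proposition \ref{pro1} shows that setting $Y = X$ makes $V = 0$ and $U = X$, collapsing the max-min-max expression cleanly to $\min_{\alpha} \pe[(X-\alpha)^2] = \overline{V}(X)$. Hence the corollary recovers Proposition \ref{pp25} as a one-line consequence of the covariance framework.
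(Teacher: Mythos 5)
Your proposal is correct and matches the paper's intent exactly: the corollary is stated immediately after Proposition \ref{lem_ucov} with no separate proof, precisely because it follows by setting $Y=X$ and invoking the identity $\overline{C}(X,X)=\overline{V}(X)$ from Proposition \ref{pro1}, both for the full family $\cp$ and for each pair $\{P_i,P_j\}$. Your check that $V=0$, $U=X$ collapses the expression to $\min_{\alpha}\pe[(X-\alpha)^2]=\overline{V}(X)$ is the right sanity check and confirms the reduction is sound.
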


Compared with Definition \ref{def_cov} and Proposition \ref{pro1}, it will be more easily to calculate upper covariance under two probability measures via Definition \ref{def_cov}.

\begin{theorem}\label{thm_cal_cov} Given $\cp=\{P_1,\cdots, P_K\}$, we denote $a_i, b_i$ and $c_i$ by $E_{P_i}[X],E_{P_i}[Y]$ and $ C_{P_i}(X,Y)$, $1\leq i\leq K$, respectively. The upper covariance of $X$ and $Y$ under $\cp$ can be calculated by
  $$ \overline{C}(X,Y)=\max\lt\{ \max_{1\le i\le K}C_{P_i}(X,Y), \max_{1\leq i\leq j\leq K}q_{ij}(\tilde{\mu}_{ij})\rt \}, $$
  where   $q_{ij}(x)$ is defined by
    \begin{equation}\label{eq_qij}
   q_{ij}(x)=\left\{ \begin{aligned}
    &\frac{1}{(b_j-b_i)}\lt( (x-b_i)(x-b_j)(a_i-a_j)+(x-b_i)(c_j-c_i)\rt)+c_i,\  & a_i\neq a_j \ \text{and}\ b_i\neq b_j\\
    & c_i, \ \ \ & \text{otherwise}
    \end{aligned}\right.
  \end{equation}
 and $\tilde{\mu}_{ij}$ is defined by
  \begin{equation}\label{eq_muij}
    \tilde{\mu}_{ij}=\lt\{ \begin{aligned}&\lt(\lt(\frac{c_j-c_i}{2(a_j-a_i)}+\frac{b_i+b_j}{2}\rt)\vee \underline{b}_{ij}\rt)\wedge \overline{b}_{ij}, \ \ & a_i\neq a_j \ \text{and} \ b_i\neq b_j, \\
    &b_i, &\text{otherwise}
     \end{aligned}
     \rt.
  \end{equation}
where $\underline{b}_{ij}=b_i\wedge b_j$ and $\overline{b}_{ij}=b_i\vee b_j$.
\end{theorem}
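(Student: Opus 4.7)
The strategy is to first invoke Proposition~\ref{lem_ucov} to reduce the calculation to $\overline{C}_{ij}(X,Y)$ for each pair $\{P_i,P_j\}$, and then exploit the fact that under two probabilities the inner integrand in Definition~\ref{def_cov} is the maximum of two affine functions in $\mu_1$. Writing $f_k(\mu_1,\mu_2):=E_{P_k}[(X-\mu_1)(Y-\mu_2)]=c_k+(a_k-\mu_1)(b_k-\mu_2)$ for $k\in\{i,j\}$, the inner problem $\min_{\mu_1}\max\{f_i,f_j\}$ is the minimum over $\mu_1$ of a convex piecewise-linear function. Assuming $a_i\neq a_j$ and $b_i\neq b_j$, for $\mu_2$ strictly between $b_i$ and $b_j$ the two slopes $\mu_2-b_i$ and $\mu_2-b_j$ have opposite signs, so the minimum is attained at the intersection $\mu_1^*(\mu_2)$ of $f_i$ and $f_j$; solving $f_i(\mu_1^*)=f_j(\mu_1^*)$ directly shows that the common value is exactly $q_{ij}(\mu_2)$ from \cref{eq_qij}.

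Next, I would maximize $q_{ij}(\mu_2)$ over $\mu_2\in[b_i\wedge b_j,b_i\vee b_j]$. Since $q_{ij}$ is a quadratic with second derivative $2(a_i-a_j)/(b_j-b_i)$, its unconstrained critical point is $\frac{c_j-c_i}{2(a_j-a_i)}+\frac{b_i+b_j}{2}$, whose projection onto $[b_i\wedge b_j,b_i\vee b_j]$ is precisely $\tilde{\mu}_{ij}$ from \cref{eq_muij}. When $(a_i-a_j)(b_i-b_j)>0$, $q_{ij}$ is concave and the maximum on the interval is the interior value $q_{ij}(\tilde{\mu}_{ij})$; otherwise $q_{ij}$ is convex or affine, its max is attained at an endpoint, and using $q_{ij}(b_i)=c_i$ and $q_{ij}(b_j)=c_j$ this endpoint value equals $\max\{c_i,c_j\}$, already captured by $\max_k C_{P_k}(X,Y)$. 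Either way $\overline{C}_{ij}(X,Y)=\max\{c_i,c_j,q_{ij}(\tilde{\mu}_{ij})\}$. The degenerate cases $a_i=a_j$ or $b_i=b_j$ reduce the convex-combination covariance $\lambda c_i+(1-\lambda)c_j+\lambda(1-\lambda)(a_i-a_j)(b_i-b_j)$ to a linear function of $\lambda$, forcing $\overline{C}_{ij}(X,Y)=\max\{c_i,c_j\}$, so assigning $q_{ij}\equiv c_i$ in these cases loses nothing. Taking the max over pairs (with the diagonal $i=j$ reducing trivially through the ``otherwise'' branch) yields the stated identity.

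The main delicacy is that for certain $\mu_2$ the intersection $\mu_1^*(\mu_2)$ can leave $[a_i\wedge a_j,a_i\vee a_j]$, in which case $q_{ij}(\mu_2)$ is not the true inner minimum at that $\mu_2$ (the actual minimizer sits on a boundary of $M_X$). One must check that this discrepancy is harmless for the outer maximum: at such $\mu_2$ the projected optimum $\tilde{\mu}_{ij}$ shifts to an endpoint of $[b_i\wedge b_j,b_i\vee b_j]$ where $q_{ij}$ returns $c_i$ or $c_j$, which is again dominated by $\max_k C_{P_k}(X,Y)$. A clean cross-check uses Theorem~\ref{prop_rt}: the parametrization $P_\lambda=\lambda P_i+(1-\lambda)P_j$ gives $\overline{C}_{ij}(X,Y)=\sup_{\lambda\in[0,1]}G(\lambda)$ with $G(\lambda)=\lambda c_i+(1-\lambda)c_j+\lambda(1-\lambda)(a_i-a_j)(b_i-b_j)$, and the bijection $\mu_2=\lambda b_i+(1-\lambda)b_j$ identifies this one-dimensional quadratic optimization with the $\mu_2$-optimization of $q_{ij}$ above, confirming the formula in every regime.
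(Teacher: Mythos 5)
Your proposal is correct and follows essentially the same route as the paper: reduce to pairs via Proposition~\ref{lem_ucov}, minimize the max of the two affine functions in $\mu_1$ at their intersection to obtain $q_{ij}(\mu_2)$, then maximize the resulting quadratic over $[b_i\wedge b_j,\,b_i\vee b_j]$ at the projected critical point $\tilde{\mu}_{ij}$. Your added case analysis on the sign of $(a_i-a_j)(b_i-b_j)$ and the cross-check via $G(\lambda)=\lambda c_i+(1-\lambda)c_j+\lambda(1-\lambda)(a_i-a_j)(b_i-b_j)$ are welcome refinements of details the paper states more tersely, but they do not change the underlying argument.
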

\begin{proof}  We only need to calculate $\overline{C}_{ij}(X,Y) $ for $1\le i<j\le K$.
  \par Without loss of generality, we assume that $b_i\le b_j$. Therefore, $\mu_2\in[b_i,b_j].$
Define function $f_{ij}(\mu_1,\mu_2)$ as
\begin{align*}
  f_{ij}(\mu_1,\mu_2):=&\max\{ c_i+(\mu_1-a_i)(\mu_2-b_i),c_j+(\mu_1-a_j)(\mu_2-b_j) \}\\
  &=
  \begin{cases}
    c_i+(\mu_1-a_i)(\mu_2-b_i), & g_{ij}(\mu_1,\mu_2)\ge 0, \\
    c_j+(\mu_1-a_j)(\mu_2-b_j), & g_{ij}(\mu_1,\mu_2) <0,
  \end{cases}
 \end{align*}
 where $g_{ij}(\mu_1,\mu_2):=(b_j-b_i)\mu_1+(a_j-a_i)\mu_2+a_ib_i-a_jb_j-c_j+c_i$.
 \par If $a_i=a_j$ or $b_i=b_j$, we have $f_{ij}(\mu_1,\mu_2)=\max\{ c_i,c_j \}.$ Next, we assume that $a_i\neq a_j$ and $b_i<b_j$.
 \par (1) If $g_{ij}\ge 0$, which means $\mu_1\ge \mu_{ij}^*(\mu_2)$, where
  $$\mu_{ij}^*(\mu_2):=\frac{1}{(b_j-b_i)} (c_j-c_i+a_jb_j-a_ib_i-(a_j-a_i)\mu_2).$$
  Since $\mu_2-b_i\ge 0$, in order to select $\mu_1$ to reach the minimum value, we have $\mu_1=\mu_{ij}^*(\mu_2)$. Take it into $f_{ij}$ we obtain
  \begin{equation*}
    f_{ij}(\mu_{ij}^*(\mu_2),\mu_2)=q_{ij}(\mu_2),
  \end{equation*}
  where $q_{ij}$ is defined in \eqref{eq_qij}.\\
  The maximum value will be reached at $b_i,b_j$ or $\tilde{\mu}_{ij}$, where $\tilde{\mu}_{ij}$ is defined in \eqref{eq_muij}.\\
  If $\mu_2=b_i$ or $b_j$, we have $q_{ij}(\mu_2)=c_i$ or $c_j$.
  \par (2) The case $g_{ij}<0$ is similar to analyse and we can obtain the same result. \\
  In conclusion, we have
  $$ \overline{C}_{ij}(X,Y)=\max\{ c_i,c_j , q_{ij}(\tilde{\mu}_{ij})\},$$
  which can directly obtain the result of the theorem.\end{proof}

Now we can provide our algorithm as following:\\
\textbf{Algorithm of Upper Covariance}
\begin{itemize}
  \item [Step 1:] Calculate $a_i=E_{P_i}[X], b_i=E_{P_i}[Y],c_i=C_{P_i}(X,Y), 1\le i\le K$.
  \item [Step 2:]For $1\le i<j\le K$, calculate $q_{ij}(\tilde{\mu}_{ij})$ and $\tilde{\mu}_{ij}$ defined in \cref{eq_qij} and \cref{eq_muij}.
  \item [Step 3:] Output the upper covariance $$\overline{C}(X,Y)=\max\lt\{ \max_{1\le i\le K}c_i, \max_{1\le i<j\le K}q_{ij}(\tilde{\mu}_{ij})\rt\}.$$
\end{itemize}

\begin{remark}
  According to (4) in Proposition \ref{prop_property} , we can easily get
$$ \underline{C}(X,Y)=-\overline{C}(X,-Y),$$
from which we can use the algorithm of upper covariance to calculate the lower covariance.
\end{remark}

In practice, the parameters $(a_i,b_i,c_i), 1\leq i\leq K$ in Theorem \ref{thm_cal_cov} can be estimated from data.
\par For example, let $(X,Y)$ be the daily returns of two stocks. In the real market, we can obtain the daily return data of these two stocks $\{ x_i\}_{i\in I},\{ y_i\}_{i\in I}$ (resp. $\{x_j\}_{j\in J},\{ y_j\}_{j\in J}$) from bull (resp. bear) market, where $I$ (resp. $J$) denotes the periods of bull (resp. bear) market. Then $K=2$ and we can estimate the sample means as
$$ \hat{a}_1=\frac{\Sigma_{i\in I}x_i}{\vert I\vert},\quad \hat{a}_2=\frac{\Sigma_{j\in J}x_j}{\vert J\vert},\quad \hat{b}_1=\frac{\Sigma_{i\in I}y_i}{\vert I\vert},\quad \hat{b}_2=\frac{\Sigma_{j\in J}y_j}{\vert J\vert} $$
and sample covariances as
$$\hat{c}_{1}=\frac{1}{|I|-1}\sum_{i\in I}(x_i-\hat{a}_1)(y_i-\hat{b}_1),\quad \hat{c}_{2}=\frac{1}{|J|-1}\sum_{j\in J}(x_j-\hat{a}_2)(y_j-\hat{b}_2).$$
Then we can take $a_i=\hat{a}_i,\ b_i=\hat{b}_i,\ c_i=\hat{c}_i,\ i=1,2$ to calculate the upper and lower covariances by our algorithm.

In the end of this section, it is natural to consider the notion of upper and lower covariance matrices under multiple probability measures, which can be widely used in many fields, especially in robust portfolio selection models.

Let ${X}=(X_1,\cdots,X_n)^T$ be random vector on sublinear expectation space $(\Omega,\cf,\pe)$ with finite second moments. Formally, we define the upper and lower covariance matrices as
\begin{equation}\label{co1}\overline{\Sigma}=\left(\overline{C}(X_i,X_j)\right)_{1\leq i,j\leq n}, \ \ \ \underline{\Sigma}=\left(\underline{C}(X_i,X_j)\right)_{1\leq i,j\leq n}.
\end{equation}
They are symmetric but not positive semi-definite in general, see Example \ref{em54}. We also point out that the upper covariance matrix for two random variables is still positive semi-definite by (4) in Proposition \ref{p38}.

\begin{example}\label{em54}
  Let $X=(X_1,X_2,X_3)$ be trivariate normally distributed under $P_1:=N(\mu_1,\Sigma_1)$ and $P_2:=N(\mu_2,\Sigma_2)$, where $\mu_i$ and $\Sigma_i$, $i=1,2$ are the mean vector and covariance matrix respectively defined as:
  $$ \mu_1=(-1,1,0)^T,\quad \mu_2=(-2,1,-1)^T,$$
  \begin{equation*}
    \Sigma_1=
      \begin{pmatrix}
          2.00&-1.20&-1.98 \\
      -1.20&2.00&2.55\\
      -1.98&2.55&4.00
      \end{pmatrix},\
      \Sigma_2=
      \begin{pmatrix}
          2.00&0.40&2.83 \\
        0.40&2.00&-1.98\\
        2.83&-1.98&4.00
      \end{pmatrix}.
  \end{equation*}

  Then
  \begin{equation*}
    \overline{\Sigma}=
      \begin{pmatrix}
          2.25&0.40&2.83 \\
        0.40&2.00&2.55\\
        2.83&2.55&4.25
      \end{pmatrix},\
      \underline{\Sigma}=
      \begin{pmatrix}
          2.00&-3.46&-2.42 \\
        -3.46&2.00&-2.40\\
        -2.42&-2.40&4.00
      \end{pmatrix}.
  \end{equation*}
  It can be verified that $\overline{\Sigma}$ and $\underline{\Sigma}$ are not positive semi-definite.
\end{example}

It is still open for us to define the upper and lower covariance matrices under multiple probability measures which are positive semi-definite. In particular, if both of $\overline{\Sigma}$ and $\underline{\Sigma}$ defined by \cref{co1} are positive semi-definite, they can be used to characterize box uncertainty on the covariance matrix in the robust portfolio selection model (see T\"{u}t\"{u}nc\"{u} and Koenig \cite{TK}, Fabozzi et al. \cite{FHZ}).

Instead of using upper and lower covariance matrices based on multiple probabilities, we consider the set of covariance matrices introduced by the convex hull of multiple probabilities. Similar to Theorem \ref{prop_rt}, we have following definition.

\begin{definition}
Let $X=(X_1,\cdots,X_n)^T$ be an $n$-dimensional random vector on the sublinear expectation space $(\Omega,\cf,\pe)$ with $\pe[||X||^2]<\infty$, where $||X||$ is the Euclidean norm of $X$. The uncertainty set of covariance matrices, denoted by $\ccs$, is defined by
$$\ccs=\lt\{\Sigma:=E_P[(X-E_P[X])(X-E_P[X])^T], \ \ \forall P\in\ccp\rt\}.$$
\end{definition}
It is easily seen that all the elements in $\ccs$ are positive semi-definite. Furthermore, we have the following inequalities.

\begin{proposition}
Let $X=(X_1,\cdots,X_n)^T$ be an $n$-dimensional random vector on the sublinear expectation space $(\Omega,\cf,\pe)$ with $\pe[||X||^2]<\infty$. For each $\Sigma=(c_{ij})_{1\leq i,j\leq n}\in\ccs$, we have
$$\underline{C}(X_i,X_j)\leq c_{ij}\leq\overline{C}(X_i,X_j),\ \ \ \ \forall\ 1\leq i,j\leq n.$$
\end{proposition}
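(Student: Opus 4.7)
The plan is to unpack the definition of $\ccs$ and then apply the envelope theorem (Theorem \ref{prop_rt}) entrywise. Fix an arbitrary element $\Sigma = (c_{ij})_{1 \leq i,j \leq n} \in \ccs$. By the definition of $\ccs$, there exists some $P \in \ccp$ such that $\Sigma = E_P[(X - E_P[X])(X - E_P[X])^T]$. Looking at the $(i,j)$-entry, this gives immediately
\[
c_{ij} = E_P[(X_i - E_P[X_i])(X_j - E_P[X_j])] = C_P(X_i, X_j).
\]

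Now I would apply the envelope representation of upper and lower covariances to this scalar quantity. By Theorem \ref{prop_rt}, for every pair $(i,j)$,
\[
\underline{C}(X_i, X_j) = \inf_{Q \in \ccp} C_Q(X_i, X_j) \leq C_P(X_i, X_j) \leq \sup_{Q \in \ccp} C_Q(X_i, X_j) = \overline{C}(X_i, X_j),
\]
since $P$ itself belongs to $\ccp$. Combining the two displays yields $\underline{C}(X_i, X_j) \leq c_{ij} \leq \overline{C}(X_i, X_j)$, as required.

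There is essentially no obstacle in this argument; it is purely a bookkeeping step that makes the envelope theorem operate on each coordinate of the covariance matrix. The only minor point worth a sentence is that the second-moment hypothesis $\pe[\|X\|^2] < \infty$ guarantees $\pe[X_i^2] + \pe[X_j^2] < \infty$ for each $i, j$, so that the hypotheses of Theorem \ref{prop_rt} are met for every pair $(X_i, X_j)$ and the upper and lower covariances appearing in the bound are well defined.
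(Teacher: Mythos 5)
Your argument is correct and is exactly the intended one: the paper in fact omits the proof of this proposition, treating it as an immediate consequence of the definition of $\ccs$ together with the envelope representation in Theorem \ref{prop_rt}, which is precisely the entrywise bookkeeping you carry out. Your remark that $\pe[\|X\|^2]<\infty$ ensures $\pe[X_i^2]+\pe[X_j^2]<\infty$ for each pair, so that Theorem \ref{prop_rt} applies, is a sensible addition.
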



\section{Application: Quadratic programming problem}

Thanks to Theorem \ref{prop_rt}, the notion of upper covariance can be used to solve a class of quadratic programming problem.

\begin{proposition}
Let $X$ and $Y$ be two random variable under $\cp=\{P_1,\cdots,P_K\}$ satisfying $\pe[X^2]+\pe[Y^2]<+\infty$. We denote $\bbm=(E_{P_1}[X],\cdots,E_{P_K}[X])^T\in\br^K$,
$\bbn=(E_{P_1}[Y],\cdots,E_{P_K}[Y])^T\in\br^K$ and $\bbk=(E_{P_1}[XY],\cdots,E_{P_K}[XY])^T\in\br^K$. Then we have
\begin{equation}\label{e51}
\overline{C}(X,Y)=\max_{\bbl\in\Delta^K}(\bbl^T\bbk-\bbl^T\bbm\bbn^T\bbl),
\end{equation}
where $\Delta^K=\{\bbl=(\lambda_1,\cdots,\lambda_K)\in\br^K:\ \ \sum_{i=1}^K\lambda_i=1, \ \ \lambda_i\geq 0,\ 1\leq i\leq K\}.$
\end{proposition}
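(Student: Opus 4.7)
The plan is to combine the envelope theorem (Theorem \ref{prop_rt}) with an explicit parametrization of the convex hull $\ccp$. Since $\cp = \{P_1, \ldots, P_K\}$ is finite, every $P \in \ccp$ takes the form $P = \sum_{i=1}^K \lambda_i P_i$ for some $\bbl = (\lambda_1, \ldots, \lambda_K) \in \Delta^K$, and conversely every such convex combination lies in $\ccp$. This gives a surjection $\Delta^K \to \ccp$, $\bbl \mapsto P_{\bbl}$, which lets us translate the supremum over $\ccp$ into a maximum over the simplex $\Delta^K$.

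Next, I would compute $C_{P_{\bbl}}(X,Y)$ explicitly in terms of $\bbl$. By linearity of expectation on convex combinations, $E_{P_{\bbl}}[X] = \bbl^T \bbm$, $E_{P_{\bbl}}[Y] = \bbl^T \bbn$, and $E_{P_{\bbl}}[XY] = \bbl^T \bbk$. Therefore
\[
C_{P_{\bbl}}(X,Y) = E_{P_{\bbl}}[XY] - E_{P_{\bbl}}[X]\, E_{P_{\bbl}}[Y] = \bbl^T \bbk - (\bbl^T \bbm)(\bbl^T \bbn) = \bbl^T \bbk - \bbl^T \bbm \bbn^T \bbl,
\]
where the last equality uses that $\bbl^T \bbm$ is a scalar so $(\bbl^T \bbm)(\bbl^T \bbn) = (\bbl^T \bbm)(\bbn^T \bbl) = \bbl^T \bbm \bbn^T \bbl$.

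Putting these two ingredients together with Theorem \ref{prop_rt} yields
\[
\overline{C}(X,Y) = \sup_{P \in \ccp} C_P(X,Y) = \sup_{\bbl \in \Delta^K}\bigl(\bbl^T \bbk - \bbl^T \bbm \bbn^T \bbl\bigr).
\]
Finally, I would note that $\Delta^K$ is a compact subset of $\br^K$ and the objective $\bbl \mapsto \bbl^T \bbk - \bbl^T \bbm \bbn^T \bbl$ is a continuous (quadratic) function of $\bbl$, so the supremum is attained and can be written as a maximum, giving \cref{e51}. There is essentially no main obstacle here: the proof is a direct application of Theorem \ref{prop_rt} combined with the parametrization of $\ccp$, and the only care needed is the bookkeeping to identify $(\bbl^T \bbm)(\bbl^T \bbn)$ with the quadratic form $\bbl^T \bbm \bbn^T \bbl$.
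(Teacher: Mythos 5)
Your proposal is correct and follows essentially the same route as the paper: apply Theorem \ref{prop_rt} to write $\overline{C}(X,Y)=\sup_{P\in\ccp}C_P(X,Y)$, parametrize $\ccp$ by the simplex $\Delta^K$, and compute $C_{P_{\bbl}}(X,Y)=\bbl^T\bbk-\bbl^T\bbm\bbn^T\bbl$. Your added remark on compactness and continuity justifying that the supremum is attained is a minor point the paper leaves implicit.
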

\begin{proof}
By Theorem \ref{prop_rt}, we have
$$\overline{C}(X,Y)=\sup_{P\in\ccp}C_P(X,Y),$$
where $\ccp=\{P_{\bbl}: P_{\bbl}=\lambda_1P_1+\cdots+\lambda_KP_K, \forall\ \bbl=(\lambda_1,\cdots,\lambda_K)^T\in\Delta^K\}$.

Thus we obtain
$$\overline{C}(X,Y)=\max_{\bbl\in\Delta^K}C_{P_{\bbl}}(X,Y),$$
where
\begin{align*}
C_{P_{\bbl}}(X,Y)=E_{P_{\bbl}}[XY]-E_{P_{\bbl}}[X]E_{P_{\bbl}}[Y]=\bbl^T\bbk-\bbl^T\bbm\bbn^T\bbl.
\end{align*}
\end{proof}

We note that the optimization problem (\ref{e51}) can be written as the quadratic programming
$$\max_{\bbl\in\Delta^K}(\bbl^T\bbk-\bbl^TQ\bbl),$$
where $Q=\frac{1}{2}(\bbm\bbn^T+\bbn\bbm^T)$. Unfortunately, $Q$ may be an indefinite matrix, see Example \ref{em51}.
\begin{example}\label{em51}
We take $\bbm=(1,1)^T, \bbn=(1,0)^T$, then
$$Q=\left(
      \begin{array}{cc}
        1 & \frac{1}{2} \\
        \frac{1}{2} & 0 \\
      \end{array}
    \right),
$$
which is indefinite.
\end{example}

There are many numerical methods to obtain the approximate solution. We can obtain the solution by the algorithm for the calculation of upper covariance.

\begin{theorem} \label{th1}
Given $\bbm=(\mu_1,\cdots,\mu_K)^T\in\br^K$,
$\bbn=(\nu_1,\cdots,\nu_K)^T\in\br^K$ and $\bbk=(\kappa_1,\cdots,\kappa_K)^T\in\br^K$. the exact solution of the following quadratic programming:
$$V=\max_{\bbl\in\Delta^K}(\bbl^T\bbk-\bbl^T\bbm\bbn^T\bbl)$$
is given by
$$V=\max\lt\{ \max_{1\le i\le K}\{ \kappa_i-\mu_i\nu_i \}, \max_{1\leq i\leq j\leq K}q_{ij}(\tilde{\mu}_{ij})\rt \},$$
where
  $q_{ij}(x)$ is defined by
    \begin{equation}\label{eq_qij2}
      q_{ij}(x)=\lt\{\begin{aligned}
          \frac{1}{(\nu_j-\nu_i)}\lt( (x-\nu_i)(x-\nu_j)(\mu_i-\mu_j)+(x-\nu_i)(\kappa_j-\kappa_i-\mu_j\nu_j+\mu_i\nu_i)\rt)+\kappa_i-\mu_i\nu_i,\\
          \ \mu_i\neq\mu_j\ \text{and}\ \nu_i\neq\nu_j \\
          \kappa_i-\mu_i\nu_i, \ \ \ \ \ \ \ \ \ \ \ \ \ \ \ \ \ \ \ \ \ \ \ \ \ \ \ \ \ \ \ \ \  \ \ \ \ \ \ \ \ \ \text{otherwise}
      \end{aligned}
      \rt.
  \end{equation}
 and $\tilde{\mu}_{ij}$ is defined by
  \begin{equation}\label{eq_muij2}
    \tilde{\mu}_{ij}=\lt\{
    \begin{aligned}
        \lt(\lt(\frac{\kappa_j-\mu_j\nu_j-\kappa_i+\mu_i\nu_i}{2(\mu_j-\mu_i)}+\frac{\nu_i+\nu_j}{2}\rt)\vee \underline{\nu}_{ij}\rt)\wedge \overline{\nu}_{ij}, \ \ \mu_i\neq\mu_j\ \text{and}\ \nu_i\neq\nu_j \\
        \nu_i, \ \ \ \ \ \ \ \ \ \ \ \ \ \ \ \ \ \ \ \ \ \ \ \ \ \ \ \ \ \ \ \ \ \ \ \ \ \ \ \ \ \ \ \ \ \ \ \ \text{otherwise}
    \end{aligned}\rt.
    \end{equation}
    where $\underline{\nu}_{ij}=\min\{\nu_i,\nu_j\}, \overline{\nu}_{ij}=\max\{\nu_i, \nu_j\}$.

If there exists $i_0$ such that $V=\kappa_{i_0}-\mu_{i_0}\nu_{i_0}$, then the optimal $\bbl^*$ is given by $\lambda_{i_0}^*=1$ and $\lambda_{j}^*=0$, $j\neq i_0$.
Otherwise, there exists $1\leq i_0<j_0\leq K$ such that $V=q_{i_0j_0}(\tilde{\mu}_{i_0j_0}) $, then the optimal $\bbl^*$ is given by $\lambda_{i_0}^*=\frac 12-\frac{\kappa_{j_0}-\mu_{j_0}\nu_{j_0}-\kappa_{i_0}+\mu_{i_0}\nu_{i_0}}{2(\mu_{j_0}-\mu_{i_0})(\nu_{j_0}-\nu_{i_0})}$ and $\lambda_{j_0}^*=1-\lambda_{i_0}^*$, $\lambda_j^*=0$, $j\neq i_0,j_0$.

\end{theorem}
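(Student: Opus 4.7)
The strategy is to reduce the quadratic programming problem to a computation of an upper covariance and then invoke Theorem \ref{thm_cal_cov}. Given arbitrary $\bbm, \bbn, \bbk \in \br^K$, I would first construct a sublinear expectation space $(\Omega, \cf, \pe)$ with $\cp = \{P_1, \ldots, P_K\}$ carrying two random variables $X, Y$ whose moments under each $P_i$ match the given data, i.e., $E_{P_i}[X] = \mu_i$, $E_{P_i}[Y] = \nu_i$, and $E_{P_i}[XY] = \kappa_i$. A concrete choice is $\Omega = \{1, 2, \ldots, 2K\}$ with $P_i$ placing mass $1/2$ on each of $\{2i-1, 2i\}$, and $X, Y$ defined by $X(2i-1) = \mu_i + s_i$, $X(2i) = \mu_i - s_i$, $Y(2i-1) = \nu_i + t_i$, $Y(2i) = \nu_i - t_i$, with real $s_i, t_i$ chosen so that $s_i t_i = \kappa_i - \mu_i \nu_i$. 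Under this realization one immediately checks $C_{P_i}(X,Y) = \kappa_i - \mu_i \nu_i$.

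By the preceding proposition, $V = \overline{C}(X,Y)$. Now I would apply Theorem \ref{thm_cal_cov} with the identifications $a_i = \mu_i$, $b_i = \nu_i$, and $c_i = \kappa_i - \mu_i \nu_i$. Direct substitution into the formulas (\ref{eq_qij}) and (\ref{eq_muij}) produces exactly (\ref{eq_qij2}) and (\ref{eq_muij2}), and hence the claimed closed form for $V$.

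For the optimal weights, I would use that the supremum in $\overline{C}(X,Y) = \sup_{P \in \ccp} C_P(X,Y)$ is attained at some $P^* = \sum_{i=1}^K \lambda_i^* P_i$ with $\bbl^* \in \Delta^K$. If the outer max is realized by $V = \kappa_{i_0} - \mu_{i_0}\nu_{i_0} = C_{P_{i_0}}(X,Y)$, then $P^* = P_{i_0}$ is optimal, so $\lambda_{i_0}^* = 1$ and the remaining weights vanish. Otherwise $V = q_{i_0 j_0}(\tilde\mu_{i_0 j_0})$ for some $i_0 < j_0$; the proofs of Proposition \ref{lem_ucov} and Theorem \ref{thm_cal_cov} show that the optimizer lies in the convex hull of $\{P_{i_0}, P_{j_0}\}$ and that $\tilde\mu_{i_0 j_0}$ is precisely the optimal second mean $E_{P^*}[Y] = \lambda_{i_0}^*\nu_{i_0} + (1-\lambda_{i_0}^*)\nu_{j_0}$. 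Solving this linear relation for $\lambda_{i_0}^*$ and plugging in the definition (\ref{eq_muij2}) of $\tilde\mu_{i_0 j_0}$ delivers the stated formula after a short algebraic simplification.

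The only subtle step is the last one, namely identifying $\tilde\mu_{i_0 j_0}$ as the $Y$-mean of the optimizing two-point mixture. This requires tracing through which optimization variable in the derivation of Theorem \ref{thm_cal_cov} corresponds to $E_P[Y]$; once this correspondence is fixed, the inversion $\lambda_{i_0}^* = (\nu_{j_0} - \tilde\mu_{i_0 j_0})/(\nu_{j_0} - \nu_{i_0})$ matches the claimed expression. The realization step in the first paragraph is entirely routine because any desired $\kappa_i - \mu_i\nu_i \in \br$ is attainable as a two-atom covariance by a suitable choice of $s_i, t_i$.
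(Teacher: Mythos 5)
Your proposal is correct and follows essentially the same route as the paper: both reduce the quadratic program to the upper covariance of a pair $(X,Y)$ realizing the prescribed moments (the paper does this via a Gaussian construction in a follow-up remark, you via an explicit two-atom one) and then invoke the pairwise reduction of Proposition \ref{lem_ucov}/Theorem \ref{thm_cal_cov}. The only cosmetic difference is in recovering $\bbl^*$: the paper maximizes the explicit quadratic $C_{P_{\bbl}}(X,Y)$ in $\lambda_{i_0}$ directly, while you invert the affine correspondence $\tilde{\mu}_{i_0j_0}=E_{P^*}[Y]=\lambda_{i_0}^*\nu_{i_0}+(1-\lambda_{i_0}^*)\nu_{j_0}$, which yields the same formula since $q_{i_0j_0}(E_{P_{\bbl}}[Y])=C_{P_{\bbl}}(X,Y)$.
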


\begin{proof} Actually, $V$ can be regarded as the upper covariance of two random variable $X$ and $Y$ under the set of probability measures $\{P_1,\cdots, P_K\}$ with $E_{P_i}[X]=\mu_i$, $E_{P_i}[Y]=\nu_i$ and $E_{P_i}[XY]=\kappa_i$, $1\leq i\leq K$.
 \par According to Proposition \ref{lem_ucov}, we have the following two cases:\\
(1) There exists $i_0$ such that $V=C_{P_{i_0}}(X,Y)=\kappa_{i_0}-\mu_{i_0}\nu_{i_0}$, then the optimal $\bbl^*$ is given by $\lambda_{i_0}^*=1$ and $\lambda_{j}^*=0$, $j\neq i_0$.\\
(2) There exists $1\leq i_0<j_0\leq K$ such that $V=\overline{C}_{i_0j_0}(X,Y)$. We consider the following maximize problem:
$$ \overline{C}_{i_0j_0}(X,Y)=\max_{\lambda_{i_0}+\lambda_{j_0}=1}\{ \lambda_{i_0}\kappa_{i_0}+\lambda_{j_0}\kappa_{j_0}-(\lambda_{i_0}\mu_{i_0}+\lambda_{j_0}\mu_{j_0})(\lambda_{i_0}\nu_{i_0}+\lambda_{j_0}\nu_{j_0}) \}. $$
The optimal solution is given by
$$ \lambda_{i_0}=\frac 12-\frac{\kappa_{j_0}-\mu_{j_0}\nu_{j_0}-\kappa_{i_0}+\mu_{i_0}\nu_{i_0}}{2(\mu_{j_0}-\mu_{i_0})(\nu_{j_0}-\nu_{i_0})}. $$
Actually, we need $0\le\lambda_{i_0}\le1$, which is equivalent to
$$ \lt\vert \frac{\kappa_{j_0}-\mu_{j_0}\nu_{j_0}-\kappa_{i_0}+\mu_{i_0}\nu_{i_0}}{(\mu_{j_0}-\mu_{i_0})(\nu_{j_0}-\nu_{i_0})}\rt\vert \le 1,$$
and equivalent to
$$\min\{\nu_{i_0},\nu_{j_0}\}\le\lt(\frac{\kappa_{j_0}-\mu_{j_0}\nu_{j_0}-\kappa_{i_0}+\mu_{i_0}\nu_{i_0}}{2(\mu_{j_0}-\mu_{i_0})}+\frac{\nu_{i_0}+\nu_{j_0}}{2}\rt)\le\max\{\nu_{i_0}, \nu_{j_0}\}.$$
Then we have
\begin{flalign*}
  V=q_{i_0j_0}(\tilde{\mu}_{i_0j_0}),
\end{flalign*}
where $q_{i_0j_0},\tilde{\mu}_{i_0j_0}$ are defined as in \cref{eq_qij2} and \cref{eq_muij2}.
\par In conclusion, we obtain the desired result.

\end{proof}

\begin{remark}
In Theorem \ref{th1}, we can take $(X,Y)\sim N\left(\mu_i,\nu_i,1,1+|\kappa_i|,\frac{\kappa_i}{\sqrt{1+|\kappa_i|}}\right)=:P_i$ $(1\leq i\leq K)$ to represent $V$ as the upper covariance of such $X$ and $Y$ under $\cp=\{P_i, 1\leq i\leq K\}$.
\end{remark}

\bibliographystyle{plain}
  
\end{document}